\numberwithin{equation}{section}
\numberwithin{figure}{section}
\theoremstyle{plain}
\newtheorem{thm}{\protect\theoremname}
\theoremstyle{plain}
\newtheorem{lem}[thm]{\protect\lemmaname}
\theoremstyle{plain}
\newtheorem{cor}[thm]{\protect\corollaryname}
\theoremstyle{plain}
\newtheorem{prop}[thm]{\protect\propositionname}
\theoremstyle{remark}
\newtheorem{rem}[thm]{\protect\remarkname}
\renewcommand*{\epsilon}{\varepsilon}
\tikzset{node distance=2.5cm, auto}
\newcommand{\myar}[2]{\ar^-{#1}[#2]}
\def\matrixobject@{%
  \edef \next@{={\DirectionfromtheDirection@ }}%
  \expandafter \toks@ \next@ \plainxy@
  \let\xy@@ix@=\xyq@@toksix@
  \xyFN@ \OBJECT@}
\let\xy@entry@@norm=\entry@@norm
\def\entry@@norm@patched{%
  \let\object@=\matrixobject@
  \xy@entry@@norm }
\newcommand{\twocong}[2][0.5]{\ar@{}[#2] \save ?(#1)*{\cong}\restore}
\newcommand{\twoeq}[2][0.5]{\ar@{}[#2] \save ?(#1)*{=}\restore}
\newcommand{\ltwocell}[3][0.5]{\ar@{}[#2] \ar@{=>}?(#1)+/r 0.15cm/;?(#1)+/l 0.15cm/^{#3}}
\newcommand{\rtwocell}[3][0.5]{\ar@{}[#2] \ar@{=>}?(#1)+/l 0.15cm/;?(#1)+/r 0.15cm/^{#3}}
\newcommand{\utwocell}[3][0.5]{\ar@{}[#2] \ar@{=>}?(#1)+/d  0.15cm/;?(#1)+/u 0.15cm/_{#3}}
\newcommand{\dtwocell}[3][0.5]{\ar@{}[#2] \ar@{=>}?(#1)+/u  0.15cm/;?(#1)+/d 0.15cm/^{#3}}
\newcommand{\ultwocell}[3][0.5]{\ar@{}[#2] \ar@{=>}?(#1)+/dr  0.15cm/;?(#1)+/ul 0.15cm/^{#3}}
\newcommand{\urtwocell}[3][0.5]{\ar@{}[#2] \ar@{=>}?(#1)+/dl  0.15cm/;?(#1)+/ur 0.15cm/^{#3}}
\newcommand{\dltwocell}[3][0.5]{\ar@{}[#2] \ar@{=>}?(#1)+/ur  0.15cm/;?(#1)+/dl 0.15cm/^{#3}}
\newcommand{\drtwocell}[3][0.5]{\ar@{}[#2] \ar@{=>}?(#1)+/ul  0.15cm/;?(#1)+/dr 0.15cm/^{#3}}
\renewcommand{\tocsection}[3]{%
  \indentlabel{\@ifnotempty{#2}{\bfseries\ignorespaces#1 #2\quad}}\bfseries#3}
\renewcommand{\tocsubsection}[3]{%
  \indentlabel{\@ifnotempty{#2}{\ignorespaces#1 #2\quad}}#3}
\newcommand\@dotsep{4.5}
\def\@tocline#1#2#3#4#5#6#7{\relax
  \ifnum #1>\c@tocdepth % then omit
  \else
    \par \addpenalty\@secpenalty\addvspace{#2}%
    \begingroup \hyphenpenalty\@M
    \@ifempty{#4}{%
      \@tempdima\csname r@tocindent\number#1\endcsname\relax
    }{%
      \@tempdima#4\relax
    }%
    \parindent\z@ \leftskip#3\relax \advance\leftskip\@tempdima\relax
    \rightskip\@pnumwidth plus1em \parfillskip-\@pnumwidth
    #5\leavevmode\hskip-\@tempdima{#6}\nobreak
    \leaders\hbox{$\m@th\mkern \@dotsep mu\hbox{.}\mkern \@dotsep mu$}\hfill
    \nobreak
    \hbox to\@pnumwidth{\@tocpagenum{\ifnum#1=1\bfseries\fi#7}}\par% <-- \bfseries for \section page
    \nobreak
    \endgroup
  \fi}
\renewcommand\csname r@tocindent0\endcsname{0pt}
\def\l@subsection{\@tocline{2}{0pt}{2.5pc}{5pc}{}}
\def\l@section{\@tocline{2}{3pt}{2.5pc}{5pc}{}}
\renewcommand{\phi}{\varphi}
\DeclareMathOperator{\el}{el}
\newcommand{\cat}[1]{\mathrm{\mathcal #1}}
\renewcommand{\mathscr}{\mathcal}
\renewcommand{\mathbf}{\cat}
\renewcommand{\mathsf}{}
\newtheorem{ax}{Axiom}
\numberwithin{thm}{subsection}
\providecommand{\corollaryname}{Corollary}
\providecommand{\lemmaname}{Lemma}
\providecommand{\propositionname}{Proposition}
\providecommand{\remarkname}{Remark}
\providecommand{\theoremname}{Theorem}
\begin{document}
\subjclass[2020]{18B10, 18N10}
\title[Bicategories of Spans as Generic Bicategories]{Bicategories of Spans as Generic Bicategories}
\begin{abstract}
In a bicategory of spans (an example of a ``generic bicategory'')
the factorization of a span $\left(s,t\right)$ as the span $\left(s,1\right)$
followed by $\left(1,t\right)$ satisfies a simple universal property
with respect to all factorizations in terms of the generic bicategory
structure. Here we show that this universal property can in fact be
used to \emph{characterize} bicategories of spans. 

This characterization of spans is very different from the others in
that it does not mention any adjointness conditions within the bicategory.
\end{abstract}

\author{Charles Walker}
\address{Department of Mathematics and Statistics, Masaryk University, Kotl{\'a}{\v r}sk{\'a}
2, Brno 61137, Czech Republic }
\email{\tt{walker@math.muni.cz}}
\thanks{This work was supported by the Operational Programme Research, Development
and Education Project \textquotedblleft Postdoc@MUNI\textquotedblright{}
(No. CZ.02.2.69/0.0/0.0/16\_027/0008360)}

\maketitle
\tableofcontents{}

\section{Introduction\label{intro}}

Bicategories of spans were introduced by Bénabou \cite{ben1967} and
have since become one of the fundamental constructions in 2-dimensional
category theory. Given a category $\mathcal{E}$ with pullbacks, its
bicategory of spans, denoted $\cat{Span}\left(\mathcal{E}\right)$,
contains the same objects as those of $\mathcal{E}$, has 1-cells
given as diagrams
\[
\xymatrix@R=1em{ & T\ar[rd]^{t}\ar[ld]_{s}\\
X &  & Y
}
\]
called spans, 2-cells between such spans given as commuting diagrams
\[
\xymatrix@R=1em{ & T\ar[rd]^{t}\ar[ld]_{s}\ar[dd]|-{h}\\
X &  & Y\\
 & s\ar[ru]_{q}\ar[lu]^{p}
}
\]
and composition of 1-cells is defined by pullback, inducing a natural
definition of horizontal composition of 2-cells. Whilst an explicit
description as above is useful, one can gain a better understanding
of where these bicategories of spans come from by giving more abstract
characterizations.

The currently known characterizations all rely heavily on the adjoint
properties of spans, and the Beck conditions that these adjoints satisfy.
For example the universal property of spans (which may be viewed as
a characterization) says giving a pseudofunctor $\cat{Span}\left(\mathcal{E}\right)\to\mathscr{C}$
is equivalent to giving a pseudofunctor $\mathcal{E}\to\mathscr{C}$
which sends morphisms to left adjoints and satisfies the Beck condition
\cite{unispans}.

There is also another more interesting characterization of bicategories
of spans due to Lack, Walters, and Wood \cite{spanscartesian}. They
show that bicategories of spans are precisely those bicategories which
are \emph{Cartesian} and satisfy the two axioms (1) \emph{every comonad
has an Eilenberg-Moore object}; and (2) \emph{every left adjoint arrow
is comonadic}\footnote{Note that this characterization considers spans over a category with
both pullbacks and a terminal object (a finitely complete category).}.

In this paper we will give another (very different) characterization
of bicategories of spans. Unlike the aforementioned characterization,
which heavily relies on the adjoint properties of spans (even the
definition of Cartesian bicategory involves adjoints within the bicategory)
ours makes no mention of adjunctions; instead these adjoint properties
of spans are seen as direct consequences of its underlying ``generic
bicategory'' structure.

G\emph{eneric bicategories} were introduced by the author in order
to better understand the universal properties bicategories of spans
of polynomials \cite{WalkerGeneric}, and can be defined as bicategories
$\mathscr{C}$ such that for each 1-cell $c\colon X\to Z$ and object
$Y$ the presheaf
\[
\mathscr{C}_{X,Z}\left(c,-\circ-\right)\colon\mathscr{C}_{Y,Z}\times\mathscr{C}_{X,Y}\to\mathbf{Set}
\]
is a coproduct of representables. For a fixed $X,Y,Z$ and $c$, the
``generic 2-cells out of $c$'' may be defined as the initial objects
within the connected components of the \emph{category of elements
of this presheaf}; that is the category whose objects are 2-cells
from $c$ into a composite of 1-cells through $Y$, written (with
composition in diagrammatic order) $c\Rightarrow a;b$, and whose
morphisms from an object $\alpha\colon c\Rightarrow a;b$ to an object
$\beta\colon c\Rightarrow a';b'$ are pairs of 2-cells $a\Rightarrow a'$
and $b\Rightarrow b'$ which when pasted with $\alpha$ give $\beta$.
A generic 2-cell is then an object $\delta\colon c\Rightarrow l;r$
which is initial in its connected component in this category.

In the case of spans, we have for each span $\left(s,t\right)\colon X\to Z$
with vertex $T$, and each object $Y$, natural isomorphisms\footnote{In general, the indexing set of a multi-adjoint is the set of connected
components of the category of elements of the relevant presheaf. Since
each connected component has a unique equivalence class of initial
objects, this is also the set of equivalence classes of generics.}
\[
\cat{Span}\left(\mathcal{E}\right)_{X,Z}\left[\left(s,t\right),-\circ-\right]\cong\sum_{h\colon T\to Y}\cat{Span}\left(\mathcal{E}\right)_{X,Y}\left(\left(s,h\right),-\right)\cdot\cat{Span}\left(\mathcal{E}\right)_{Y,Z}\left(\left(h,t\right),-\right)
\]
so that $\cat{Span}\left(\mathcal{E}\right)$ is a generic bicategory,
and the generic 2-cells (which are recovered by substituting a pair
of identity 2-cells on the right above) are the morphisms of spans
$\left(s,t\right)\Rightarrow\left(s,h\right);\left(h,t\right)$ given
as diagrams
\begin{equation}
\xymatrix@R=1em{ &  & T\ar@/^{1pc}/[rrddd]^{t}\ar@/_{1pc}/[llddd]_{s}\ar@{..>}[d]^{\delta}\\
 &  & M\ar[rd]^{\pi_{2}}\ar[ld]_{\pi_{1}}\\
 & T\ar[rd]^{h}\ar[ld]_{s} &  & T\ar[rd]^{t}\ar[ld]_{h}\\
X &  & Y &  & Z
}
\label{facofgamma}
\end{equation}
such that $\pi_{1}\delta=\pi_{2}\delta=1_{T}$. We now observe a property
unusual to generic bicategories, we have a ``best generic 2-cell''
(which we will call an \emph{initial generic}), given by taking $h$
to be the identity. So for each span $\left(s,t\right)$, we have
an initial generic $\left(s,t\right)\Rightarrow\left(s,1\right);\left(1,t\right)$,
and a general generic 2-cell $\left(s,t\right)\Rightarrow\left(s,h\right);\left(h,t\right)$
may be recovered by pasting this initial generic with the generic
2-cell $\left(1,1\right)\Rightarrow\left(1,h\right);\left(h,1\right)$.

As any 2-cell $\left(s,t\right)\Rightarrow\left(a,b\right);\left(c,d\right)$
factors through an essentially unique generic 2-cell $\left(s,t\right)\Rightarrow\left(s,h\right);\left(h,t\right)$,
and this itself factors through the initial generic $\left(s,t\right)\Rightarrow\left(s,1\right);\left(1,t\right)$,
we see that bicategories of spans satisfy the existence part of the
following condition.

\begin{ax}[Every 1-cell has an initial generic 2-cell]\label{initialgeneric}
For every 1-cell $c\colon X\to Z$ in $\mathscr{C}$, there exists
an invertible generic 2-cell $\delta\colon c\Rightarrow l;r$ (through
some object $Y$) which is universal in that given any 2-cell $\gamma\colon c\Rightarrow a;b$
(through another object $Y'$) there exists a generic 2-cell $\eta\colon1_{Y}\Rightarrow h;k$
and 2-cells $\alpha$ and $\beta$ as below factoring $\gamma$ as
\begin{equation}
\xymatrix@=1.5em{ &  & \dtwocell{d}{\delta}\\
X\ar@/^{2pc}/[rrrr]^{c}\ar[r]_{l}\ar@/_{1pc}/[rrd]_{a} & Y\ar[rr]_{\textnormal{id}}\ar[rd]_{h} & \dtwocell{d}{\eta}\dtwocell[0.55]{dll}{\alpha}\dtwocell[0.55]{drr}{\beta} & Y\ar[r]_{r} & Z.\\
\; & \; & Y'\ar[ur]_{k}\ar@/_{1pc}/[urr]_{b} & \; & \;
}
\label{initfac}
\end{equation}
Moreover, such a factorization is unique in that given another factorization
\[
\xymatrix@=1.5em{ &  & \dtwocell{d}{\delta}\\
X\ar@/^{2pc}/[rrrr]^{c}\ar[r]_{l}\ar@/_{1pc}/[rrd]_{a} & Y\ar[rr]_{\textnormal{id}}\ar[rd]_{h'} & \dtwocell{d}{\eta'}\dtwocell[0.55]{dll}{\alpha'}\dtwocell[0.55]{drr}{\beta'} & Y\ar[r]_{r} & Z\\
\; & \; & Y'\ar[ur]_{k'}\ar@/_{1pc}/[urr]_{b} & \; & \;
}
\]
with $\eta'\colon1_{Y}\Rightarrow h';k'$ generic, we have induced
comparison isomorphisms $h\cong h'$ and $k\cong k'$ coherent with
$\eta$ and $\eta'$ (necessarily unique by genericity of $\eta$)
and $\alpha,\alpha',\beta,\beta'$. 

\end{ax}

To verify the uniqueness condition mentioned above is satisfied, it
is easiest to first consider the case where $\eta'$ is a representative
generic 2-cell, that is a 2-cell of the form $\eta'\colon1\Rightarrow\left(1,h'\right);\left(h',1\right)$.
In this case we have that $\gamma$ factors through the generic 2-cell
$\left(s,t\right)\Rightarrow\left(s,h\right);\left(h,t\right)$ as
well as the generic 2-cell $\left(s,t\right)\Rightarrow\left(s,h'\right);\left(h',t\right)$,
but since $\cat{Span}\left(\mathcal{E}\right)_{X,Z}\left[\left(s,t\right),\left(a,b\right);\left(c,d\right)\right]$
is isomorphic to 
\[
\sum_{h\colon T\to Y}\cat{Span}\left(\mathcal{E}\right)_{X,Y}\left(\left(s,h\right),\left(a,b\right)\right)\cdot\cat{Span}\left(\mathcal{E}\right)_{Y,Z}\left(\left(h,t\right),\left(c,d\right)\right)
\]
and both the triples $\left(h,\alpha,\beta\right)$ and $\left(h',\alpha',\beta'\right)$
correspond to $\gamma$ under this bijection, we must conclude the
two triples are equal. Thus we have strict uniqueness when one requires
$\eta'$ be a representative generic 2-cell. Given a factorization
of $\gamma$ as on the left below (with $\eta'$ not assumed representative)
\[
\xymatrix@=1.5em{ &  & \dtwocell{d}{\delta} &  &  &  &  &  & \dtwocell[0.4]{d}{\delta}\\
X\ar@/^{2pc}/[rrrr]^{c}\ar[r]_{l}\ar@/_{1pc}/[rrd]_{a} & Y\ar[rr]_{\textnormal{id}}\ar[rd]_{h'} & \dtwocell{d}{\eta'}\dtwocell[0.55]{dll}{\alpha'}\dtwocell[0.55]{drr}{\beta'} & Y\ar[r]_{r} & Z & = & X\ar@/^{2pc}/[rrrr]^{c}\ar[r]^{l}\ar@/_{1pc}/[rrd]_{a} & Y\ar[rr]^{1}\ar@/^{0.6pc}/[rd]|-{m}\ar@/_{0.6pc}/[rd]|-{h'} & \dtwocell[0.3]{d}{\sigma}\dtwocell[0.6]{dll}{\alpha'}\dtwocell[0.55]{drr}{\beta'}\dltwocell{ld}{}\drtwocell{rd}{} & Y\ar[r]^{r} & Z\\
\; & \; & Y'\ar[ur]_{k'}\ar@/_{1pc}/[urr]_{b} & \; & \; &  & \; & \; & Y'\ar@/_{1pc}/[urr]_{b}\ar@/^{0.6pc}/[ru]|-{n}\ar@/_{0.6pc}/[ru]|-{k'} & \; & \;
}
\]
we may factor $\eta'$ through a representative generic $\sigma$
as on the right above (the induced comparisons being of course invertible).
The strict uniqueness that holds with representatives then ensures
the comparisons are coherent with $\alpha,\alpha'$ and $\beta,\beta$.

Finally, there is one more crucial observation we must make. Given
any initial generic pasted with a generic out of an identity $\left(s,t\right)\Rightarrow\left(s,1\right);\left(1,h\right);\left(h,1\right);\left(1,t\right)$
we have that the identities $\left(s,h\right)\Rightarrow\left(s,1\right);\left(1,h\right)$
and $\left(h,t\right)\Rightarrow\left(h,1\right);\left(1,t\right)$
are initial generics, so that bicategories of spans also satisfy the
following condition.

\begin{ax}[Initial factorizations yield initial generics]\label{respinit}
For every such factorization \eqref{initfac} the identities $\left(l;h\right)\Rightarrow l;h$
and $\left(k;r\right)\Rightarrow k;r$ are initial generics.

\end{ax}

The purpose of this paper is to address the question: what \emph{really}
are bicategories of spans? To which we propose the answer that bicategories
of spans are precisely the ``perfect'' generic bicategories; meaning
those generic bicategories for which we have not only generic 2-cells,
but ``best generic 2-cells'' (Axiom \ref{initialgeneric}), and
these best generics induce other best generics (Axiom \ref{respinit}).

\section{Characterizing bicategories of spans}

The goal of this paper is to show that just Axioms \ref{initialgeneric}
and \ref{respinit} are enough to characterize bicategories of spans.
However, at least in the authors opinion, this is not an obvious fact.
Whilst each 1-cell $c$ having an initial generic $c\cong l;r$ is
suggestive of a mapping into spans, there are many points that are
not clear. For example we must deduce that 2-cells are morphisms of
spans, and deduce that composition is by pullback. Moreover, such
facts will rely on one the more fundamental properties of the span
construction used extensively in its characterization in terms of
Cartesian bicategories \cite{spanscartesian}; that any 2-cell between
left adjoints is both unique and invertible, and even this fact is
not clear.

\subsection{Preliminaries}

The purpose of this subsection is to mention some properties of generic
bicategories that will be needed. The properties mentioned here do
not rely on axioms \ref{initialgeneric} or \ref{respinit}.

The following two lemmata are recalled directly from \cite{WalkerGeneric}.
The following recalls that the 3-dimensional version of generic cells
(3-ary generics) are constructed by composing two 2-ary generics.
\begin{lem}
\label{3ary}\cite[Lemma 13]{WalkerGeneric} Suppose $\mathscr{C}$
is a generic bicategory. A 2-cell $\delta\colon c\Rightarrow l;m;r$
is 3-ary generic\footnote{That is initial in a connected component of the category whose objects
are 2-cells $c\Rightarrow x;y;z$ and whose morphisms are coherent
triples of 2-cells $x\Rightarrow x'$, $y\Rightarrow y'$, $z\Rightarrow z'$.} if and only if $\delta$ can be factored as a composite of two 2-ary
generics as on the left below if and only if $\delta$ can be factored
as a composite of two 2-ary generics as on the right below
\[
\xymatrix{c\ar@{=>}[r]^{\sigma_{1}} & l;q\ar@{=>}[r]^{l;\sigma_{2}\quad} & l;m;r &  & c\ar@{=>}[r]^{\sigma_{3}} & p;r\ar@{=>}[r]^{\sigma_{4};r\quad} & l;m;r}
\]
\end{lem}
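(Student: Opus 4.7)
The plan is to reduce 3-ary genericity of $\delta\colon c\Rightarrow l;m;r$ to two successive applications of the 2-ary universal property, by choosing a bracketing. I would prove the left-bracketed equivalence first; the right-bracketed version follows by a symmetric argument, viewing $\delta$ as $c\Rightarrow (l;m);r$ rather than $c\Rightarrow l;(m;r)$.

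For the direction \emph{composite of 2-ary generics is 3-ary generic}, I would verify initiality directly. Given $\sigma_{1}\colon c\Rightarrow l;q$ and $\sigma_{2}\colon q\Rightarrow m;r$ both 2-ary generic and any 3-ary 2-cell $\gamma\colon c\Rightarrow x;y;z$ in the same connected component as $(l;\sigma_{2})\circ\sigma_{1}$, regard $\gamma$ as a 2-ary 2-cell $c\Rightarrow x;(y;z)$. The 2-ary genericity of $\sigma_{1}$ supplies a comparison pair $(\alpha\colon l\Rightarrow x,\,\tau\colon q\Rightarrow y;z)$ compatible with $\sigma_{1}$ and $\gamma$. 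Applying 2-ary genericity of $\sigma_{2}$ to $\tau$ then yields a comparison pair $(\beta\colon m\Rightarrow y,\,\epsilon\colon r\Rightarrow z)$. Pasting produces a morphism $(\alpha,\beta,\epsilon)$ from the composite to $\gamma$ in the 3-ary category of elements, and uniqueness of this morphism follows from iterated uniqueness of each 2-ary factorization.

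For the converse, take any 3-ary generic $\delta\colon c\Rightarrow l;m;r$ and regard it 2-arily as $c\Rightarrow l;(m;r)$. The genericity of $\mathscr{C}$ furnishes a 2-ary generic $\sigma_{1}\colon c\Rightarrow l';q$ through which $\delta$ factors, producing a 2-cell $\tau\colon q\Rightarrow m;r$ together with a comparison $\alpha\colon l'\Rightarrow l$. Factoring $\tau$ through a 2-ary generic $\sigma_{2}\colon q\Rightarrow m';r'$ yields further comparisons $\beta\colon m'\Rightarrow m$ and $\epsilon\colon r'\Rightarrow r$. The composite $(l';\sigma_{2})\circ\sigma_{1}\colon c\Rightarrow l';m';r'$ is 3-ary generic by the first step, lies in the same connected component as $\delta$, and the triple $(\alpha,\beta,\epsilon)$ realizes a morphism into $\delta$; the 3-ary initiality of $\delta$ inverts it (through the appropriate isomorphism of the category of elements), identifying $\delta$ with a composite of the required form.

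The main obstacle is the bookkeeping of associators and the fact that 2-ary genericity only provides comparisons up to essentially unique isomorphism: equalities among the various pasted 3-ary cells have to be tracked through coherent triples rather than strict equalities. In particular, verifying that the composite $(l';\sigma_{2})\circ\sigma_{1}$ actually represents $\delta$ (rather than merely something equivalent to it) requires invoking the uniqueness clause of 2-ary genericity at both stages and checking that the two applications agree on the shared middle 1-cell $q$. Once this coherence is pinned down, the equivalence between the two bracketings follows because each bracketing exhibits the same 3-ary connected component as being generated by a pair of 2-ary generics.
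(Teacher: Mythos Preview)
The paper does not actually prove this lemma; it is recalled verbatim from \cite[Lemma~13]{WalkerGeneric} as background, so there is no ``paper's own proof'' to compare against here.

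That said, your argument is essentially correct and is the natural one. One point deserves slightly more care than you give it: in the forward direction, after extracting $\tau\colon q\Rightarrow y;z$ from the genericity of $\sigma_{1}$, you need $\tau$ to lie in the same 2-ary connected component as $\sigma_{2}$ before you can invoke genericity of $\sigma_{2}$. This is not automatic from the statement ``$\gamma$ is 3-ary connected to the composite''; you must observe that a 3-ary zigzag between $\gamma$ and $(l;\sigma_{2})\circ\sigma_{1}$, when pushed through the unique 2-ary factorizations via $\sigma_{1}$, descends to a 2-ary zigzag between $\tau$ and $\sigma_{2}$. Concretely, each step $(a,b,e)$ in the 3-ary zigzag yields, by uniqueness of factorization through $\sigma_{1}$, a 2-ary step $(b,e)$ between the corresponding $q$-factorizations. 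Once this is noted, your existence and uniqueness arguments go through exactly as written.

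In the converse direction your reasoning is fine: the composite $(l';\sigma_{2})\circ\sigma_{1}$ is 3-ary generic by the first part, the triple $(\alpha,\beta,\epsilon)$ is a morphism from it to $\delta$, and two initial objects in a connected component are uniquely isomorphic, so $\alpha,\beta,\epsilon$ are invertible. Absorbing these isomorphisms into $\sigma_{1}$ and $\sigma_{2}$ (which preserves genericity) then exhibits $\delta$ itself as a composite of 2-ary generics with the correct boundary 1-cells $l,m,r$.
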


The following fact will only be used to check that $1_{X}\Rightarrow1_{X};1_{X}$
is always an initial generic; a property we should definitely expect.
\begin{lem}
\label{unitorinvert}\cite[Lemma 11]{WalkerGeneric} Suppose $\mathscr{C}$
is a generic bicategory. If a left unitor $c\Rightarrow1;c$ in $\mathscr{C}$
factors through a generic $c\Rightarrow l;r$ then the induced $r\Rightarrow c$
is invertible.
\end{lem}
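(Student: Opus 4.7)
The plan is to construct an explicit two-sided inverse for $\beta\colon r\Rightarrow c$. Write $\lambda_{c}\colon c\Rightarrow 1;c$ for the given left unitor, and let the factorization supplied by the hypothesis be $\lambda_{c}=(\alpha * \beta)\cdot\delta$ with $\alpha\colon l\Rightarrow 1$ and $\beta\colon r\Rightarrow c$; here $*$ denotes horizontal composition and $\cdot$ vertical composition. The natural candidate for the inverse is
\[
\beta'\;:=\;\lambda_{r}^{-1}\cdot(\alpha * r)\cdot\delta\;\colon\;c\Rightarrow r,
\]
where $\lambda_{r}\colon r\Rightarrow 1;r$ is the unitor at $r$. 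One is essentially forced to this expression: it is what one gets by keeping only the $\alpha$-part of the factorization and then stripping off the resulting spurious unit on the right.

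First I would verify $\beta\cdot\beta' = 1_{c}$. This half is purely formal. The interchange identity gives $\alpha * \beta=(1*\beta)\cdot(\alpha*r)$, and the naturality of the left unitor at $\beta$ gives $\lambda_{c}\cdot\beta=(1*\beta)\cdot\lambda_{r}$, equivalently $\beta\cdot\lambda_{r}^{-1}=\lambda_{c}^{-1}\cdot(1*\beta)$. Substituting, the composite $\beta\cdot\beta'$ collapses to $\lambda_{c}^{-1}\cdot(\alpha * \beta)\cdot\delta=\lambda_{c}^{-1}\cdot\lambda_{c}=1_{c}$. Only the invertibility of the unitor is used here; genericity has not yet played a role.

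The delicate half is $\beta'\cdot\beta = 1_{r}$, and this is where the universal property of the generic $\delta$ must enter. The idea is to compute the 2-cell $(\alpha *(\beta'\cdot\beta))\cdot\delta\colon c\Rightarrow 1;r$ in two ways. Applying interchange, then the naturality square $(1*(\beta'\cdot\beta))\cdot\lambda_{r}=\lambda_{r}\cdot(\beta'\cdot\beta)$, and finally the identity $(\beta'\cdot\beta)\cdot\beta'=\beta'\cdot(\beta\cdot\beta')=\beta'$ obtained in the previous step, one finds
\[
(\alpha *(\beta'\cdot\beta))\cdot\delta\;=\;\lambda_{r}\cdot\beta'\;=\;(\alpha * r)\cdot\delta\;=\;(\alpha * 1_{r})\cdot\delta.
\]
Hence the two pairs $(\alpha,\beta'\cdot\beta)$ and $(\alpha,1_{r})$ both factor the same 2-cell through the generic $\delta$, and the uniqueness clause in the definition of a generic 2-cell forces $\beta'\cdot\beta=1_{r}$.

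The main conceptual obstacle is recognizing that it is the \emph{second} equation, not the first, that requires the universal property of $\delta$. Invertibility of the unitor alone produces a one-sided inverse, and it would be easy to think the job is done; the closing of the retract into a genuine isomorphism is precisely where the uniqueness part of genericity (often de-emphasized relative to existence) does the essential work.
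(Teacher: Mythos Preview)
Your argument is correct. The paper itself gives no proof of this lemma; it is simply quoted from \cite[Lemma 11]{WalkerGeneric}. So there is nothing to compare against here beyond checking that what you wrote actually works, and it does: the first half $\beta\cdot\beta'=1_{c}$ is a clean unitor/interchange computation, and the second half correctly isolates the role of genericity by exhibiting two factorizations $(\alpha,\beta'\cdot\beta)$ and $(\alpha,1_{r})$ of the same 2-cell $c\Rightarrow 1;r$ through $\delta$, whence uniqueness (initiality of $\delta$ in its connected component) forces $\beta'\cdot\beta=1_{r}$.

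Two small remarks. First, it is worth saying explicitly that the target $(\alpha*1_{r})\cdot\delta$ lies in the same connected component as $\delta$ (it obviously does, via the morphism $(\alpha,1_{r})$), since initiality only grants unique maps within a component. Second, your closing comment about which half needs genericity is apt and is exactly the content the cited lemma is meant to capture; you might add that this is the standard mechanism by which a section becomes an inverse once one has a uniqueness clause to appeal to.
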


The desired property then follows almost immediately.

\begin{cor}
\label{idinit} Suppose $\mathscr{C}$ is a generic bicategory. Then
the unitor $1_{X}\Rightarrow1_{X};1_{X}$ is always an initial generic.
\end{cor}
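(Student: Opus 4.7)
The plan is to verify, for $c = 1_X$ and $l = r = 1_X$, the two clauses of the initial-generic property in Axiom \ref{initialgeneric} applied to the unitor $\lambda \colon 1_X \Rightarrow 1_X; 1_X$, regarded as a 2-cell through $Y = X$.

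First I would show that $\lambda$ is an invertible generic 2-cell. Since $\mathscr{C}$ is a generic bicategory, $\lambda$ admits a factorisation through some generic $\delta \colon 1_X \Rightarrow l; r$ via a coherent pair $(p \colon l \Rightarrow 1_X,\; q \colon r \Rightarrow 1_X)$. Viewing $\lambda$ as the left unitor $1_X \Rightarrow 1_X; 1_X$, Lemma \ref{unitorinvert} yields that $q$ is invertible; applying the same lemma to $\lambda$ viewed instead as the right unitor (equivalently, working in $\mathscr{C}^{\mathrm{op}}$) gives that $p$ is also invertible. Thus $\delta \cong \lambda$ as factorisations, and $\lambda$ is itself generic (and invertible since unitors in a bicategory are).

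Next I would verify the universal factorisation. Given a 2-cell $\gamma \colon 1_X \Rightarrow a; b$ through an object $Y'$, the generic bicategory structure supplies a 2-ary generic $\eta \colon 1_X \Rightarrow h; k$ through $Y'$ (with $h \colon X \to Y'$ and $k \colon Y' \to X$) and 2-cells $\alpha \colon h \Rightarrow a$, $\beta \colon k \Rightarrow b$ whose horizontal composite pasted with $\eta$ recovers $\gamma$. Since $l = r = 1_X$ and $Y = X$, the diagram \eqref{initfac} with $\delta = \lambda$ commutes once one absorbs the unitor isomorphisms $1_X; h \cong h$ and $k; 1_X \cong k$ into $\alpha$ and $\beta$; this gives existence.

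Uniqueness reduces directly to the uniqueness clause for generic factorisations in the connected component of $\gamma$ in the category of elements of $\mathscr{C}_{X,X}(1_X, -\circ-)$, together with the fact that $\lambda$ is invertible and therefore can be absorbed into either side without altering genericity. The main obstacle is the first step: Lemma \ref{unitorinvert} as stated delivers invertibility only of the right-hand factor $q$, so some care is needed to invoke the evident right-unitor variant (or pass to $\mathscr{C}^{\mathrm{op}}$) in order to conclude that $p$ is also invertible, and to check that the two invertibilities are compatible with the generic $\delta$.
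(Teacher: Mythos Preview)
Your proposal is correct and follows essentially the same approach as the paper's proof: factor the unitor through a generic, use Lemma~\ref{unitorinvert} together with the fact that $1_X \Rightarrow 1_X;1_X$ is simultaneously the left and right unitor to conclude both comparison cells are invertible (hence the unitor is itself generic), and then observe that the universal property of Axiom~\ref{initialgeneric} becomes, after absorbing unitors, precisely the standard generic factorisation of $\gamma$. The paper dismisses the existence and uniqueness verification as ``completely trivial''; you have spelled it out, and your identification of the one genuine subtlety---that Lemma~\ref{unitorinvert} as stated handles only one side, so one must invoke its evident dual---matches exactly what the paper means by ``the fact this is both a left and right unitor.''
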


\begin{proof}
We first factor $1_{X}\Rightarrow1_{X};1_{X}$ through a generic 2-cell.
By Lemma \ref{unitorinvert}, and the fact this both a left and right
unitor, both induced comparison 2-cells are invertible, so that $1_{X}\Rightarrow1_{X};1_{X}$
must itself be generic. It is then completely trivial to directly
verify that $1_{X}\Rightarrow1_{X};1_{X}$ satisfies the universal
property of an initial generic 2-cell.
\end{proof}
Whilst our characterization makes no mention of adjunctions, the proof
of the characterization will rely on them heavily. It will therefore
be prudent to recall the so called ``mates correspondence'' \cite{2caty}.
\begin{prop}
[Mates correspondence] Suppose $f_{1}\dashv g_{1}$ and $f_{2}\dashv g_{2}$
are adjoint pairs in a bicategory. Then for any 1-cells $p$ and $q$
as below, pasting with units and counits of these adjunctions defines
a natural bijection
\[
\xymatrix{A\ar[d]_{f_{1}}\ar[r]^{p}\dtwocell{dr}{\alpha} & B\ar[d]^{f_{2}} &  &  & A\ar[r]^{p}\dtwocell{dr}{\beta} & B\\
C\ar[r]_{q} & D &  &  & C\ar[r]_{q}\ar[u]^{g_{1}} & D\ar[u]_{g_{2}}
}
\]
between 2-cells $\alpha$ and 2-cells $\beta$ as above.
\end{prop}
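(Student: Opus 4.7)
The plan is direct: construct the two maps explicitly as pastings with units and counits, then verify they are mutually inverse by reducing to the triangle identities of the two adjunctions. Following the orientation of the first diagram, read $\alpha$ as a 2-cell $f_2 p \Rightarrow q f_1$ and $\beta$ as a 2-cell $p g_1 \Rightarrow g_2 q$, and let $\eta_i, \epsilon_i$ denote the unit and counit of $f_i \dashv g_i$.

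First I would define the forward map $\alpha \mapsto \beta$ as the pasting
\[
p g_1 \xRightarrow{\eta_2 \, p g_1} g_2 f_2 p g_1 \xRightarrow{g_2 \alpha g_1} g_2 q f_1 g_1 \xRightarrow{g_2 q \, \epsilon_1} g_2 q
\]
obtained by whiskering $\eta_2$ above $\alpha$ and $\epsilon_1$ below, and the reverse map $\beta \mapsto \alpha$ as
\[
f_2 p \xRightarrow{f_2 p \, \eta_1} f_2 p g_1 f_1 \xRightarrow{f_2 \beta f_1} f_2 g_2 q f_1 \xRightarrow{\epsilon_2 \, q f_1} q f_1,
\]
inserting the appropriate associators and unitors as required in a general bicategory. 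Naturality of each map in its respective 2-cell is immediate from functoriality of whiskering, together with the fact that the remaining components of the pasting are fixed.

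The core step is verifying mutual invertibility. Substituting the formula for $\beta$ into the second pasting and using the interchange law to reorganise the resulting whiskerings, one obtains on the $f_2 \dashv g_2$ side a subpasting of the form $(\epsilon_2 f_2) \circ (f_2 \eta_2)$, which collapses to $1_{f_2}$ by one triangle identity, and on the $f_1 \dashv g_1$ side a subpasting $(\epsilon_1 f_1) \circ (f_1 \eta_1)$ that collapses to $1_{f_1}$ by the other; what remains is precisely $\alpha$. The opposite round trip, starting from $\beta$, produces the dual zigzags $(g_2 \epsilon_2) \circ (\eta_2 g_2)$ and $(g_1 \epsilon_1) \circ (\eta_1 g_1)$, which collapse to identities on $g_2$ and $g_1$ via the $g$-side triangle identities, returning $\beta$. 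The main obstacle is purely notational: the rearrangement in a non-strict bicategory requires careful tracking of associators and unitors, and is far more transparent in string-diagram form, where the mating operations become ``bendings'' of strings along units and counits and the triangle identities appear as straightenings of zigzags.
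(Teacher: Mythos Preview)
Your argument is correct and is the standard proof of the mates correspondence: define the two pastings with units and counits, then use interchange and the triangle identities to show they are mutually inverse. There is nothing to fault in the reasoning, and your remark that the bookkeeping with associators and unitors is best handled diagrammatically is well taken.

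By way of comparison, the paper does not actually supply a proof of this proposition at all. It is stated purely as a recalled fact, with a citation to Kelly--Street's review of 2-categories, and is used as a black box in what follows (chiefly to show that 2-cells between left adjoints are unique and, later, invertible). So your write-up goes beyond what the paper does here; it is a genuine proof where the paper simply invokes the literature.
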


In particular, we will need the fact that 2-cells of left adjoints
are unique; as one would expect as this is a fundamental property
of the span construction.
\begin{prop}
\label{adjcellsunique} Suppose $\mathscr{C}$ is a generic bicategory.
A 2-cell of left adjoints in $\mathscr{C}$ (and similarly a 2-cell
of right adjoints) is unique.
\end{prop}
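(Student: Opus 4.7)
The plan is to translate the uniqueness question, via the adjunction bijection, into a question about a hom-set into a composite, and then apply the coproduct-of-representables structure of the generic bicategory. Given two parallel 2-cells $\alpha, \beta : f \Rightarrow f'$ between left adjoints with $f \dashv g$ and $f' \dashv g'$, the adjunction $f \dashv g$ supplies a bijection $\mathscr{C}(A,B)(f, f') \cong \mathscr{C}(A,A)(1_A, g f')$ under which $\alpha \mapsto g\alpha \cdot \eta$. So it suffices to control the cardinality of $\mathscr{C}(A,A)(1_A, g f')$.

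By the generic bicategory structure, $\mathscr{C}(A,A)(1_A, g f')$ decomposes as $\sum_{[\delta]} \mathscr{C}(A,B)(l_\delta, f') \times \mathscr{C}(B,A)(r_\delta, g)$ over equivalence classes of 2-ary generic 2-cells $\delta : 1_A \Rightarrow l_\delta; r_\delta$ through $B$. The key preparatory claim is that the unit $\eta : 1_A \Rightarrow g f$ is itself generic: factoring $\eta$ through an arbitrary 2-ary generic $\delta' : 1_A \Rightarrow l; r$ and pasting with the triangle identity $g\epsilon \cdot \eta g = 1_g$ (or $\epsilon f \cdot f\eta = 1_f$), one can arrange the resulting composite so that Lemma \ref{unitorinvert} applies, forcing the induced comparisons $l \Rightarrow f$ and $r \Rightarrow g$ to be invertible. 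Once $\eta$ is known to be generic, the transpose $g\alpha \cdot \eta$ factors through $\eta$ as the pair $(\alpha, 1_g)$ in the summand $\mathscr{C}(A,B)(f,f') \times \mathscr{C}(B,A)(g,g)$, and similarly for $\beta$.

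To close the argument, one combines the adjunction bijection with the coproduct decomposition to obtain the identity $|\mathscr{C}(A,B)(f,f')| = |\mathscr{C}(A,B)(f,f')| \cdot |\mathscr{C}(B,A)(g,g)| + (\text{other summands})$, which forces $\mathscr{C}(B,A)(g,g) = \{1_g\}$ and all non-$\eta$ summands to be empty. The mates correspondence then exchanges 2-cells $\alpha : f \Rightarrow f'$ with 2-cells $\alpha^* : g' \Rightarrow g$ between right adjoints in the opposite direction; running the dual of the above argument (with $g' \dashv$-type decomposition and $\epsilon'$ in place of $\eta$) and using the endo-uniqueness just established, we conclude $|\mathscr{C}(A,B)(f,f')| \leq 1$. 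The proof for right adjoints is dual throughout. The main obstacle is the first step—showing $\eta$ is 2-ary generic using only the generic bicategory axioms (without invoking Axioms \ref{initialgeneric} or \ref{respinit})—and the secondary difficulty is ensuring that the bootstrapping via mates genuinely closes rather than circles back on itself; it may be necessary to invoke Lemma \ref{3ary} to manufacture a suitable 3-ary generic from two units and extract uniqueness from its universal property.
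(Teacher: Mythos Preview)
Your proposal has genuine gaps, and the paper's route is both shorter and avoids them.

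The paper's proof is essentially one line: in a generic bicategory, identity 1-cells are \emph{subterminal} (any 1-cell admits at most one 2-cell into an identity), a fact cited from \cite[Prop.~9]{WalkerGeneric}. Given parallel left adjoints $f,f'\colon A\to B$ with $f'\dashv g'$, the mates correspondence yields a bijection
\[
\mathscr{C}_{A,B}(f,f')\;\cong\;\mathscr{C}_{B,B}\bigl(g';f,\,1_B\bigr),
\]
and the right-hand side has at most one element by subterminality of $1_B$. That is the whole argument.

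You take the mate using the other adjunction $f\dashv g$, landing in $\mathscr{C}_{A,A}(1_A,\,f';g)$, and then try to analyse this hom-set via the coproduct-of-representables decomposition. Two problems arise. First, the cardinality identity
\[
\lvert\mathscr{C}(f,f')\rvert \;=\; \lvert\mathscr{C}(f,f')\rvert\cdot\lvert\mathscr{C}(g,g)\rvert \;+\; (\text{other summands})
\]
does \emph{not} force $\lvert\mathscr{C}(g,g)\rvert=1$ when the sets involved are infinite, and gives no information when $\mathscr{C}(f,f')$ is empty; cancellation of cardinals is simply unavailable here. Second, your claim that the unit $\eta$ is 2-ary generic using only the generic-bicategory axioms is precisely the content of Lemma~\ref{unitimpliesgen} in the paper, whose proof \emph{requires} Axioms~\ref{initialgeneric} and~\ref{respinit} and in fact invokes Proposition~\ref{adjcellsunique} itself. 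Your appeal to Lemma~\ref{unitorinvert} is not convincing as stated: that lemma concerns factorizations of a \emph{unitor} $c\Rightarrow 1;c$ through a generic, and the triangle identities do not in any evident way convert the factorization of $\eta\colon 1_A\Rightarrow f;g$ through a generic into one of that shape. The bootstrapping worry you flag is real but moot given these earlier failures.

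The fix is to take the mate using the adjunction on the \emph{codomain} side ($f'\dashv g'$), so that the identity $1_B$ appears in the target slot of the hom-set, where subterminality applies directly with no decomposition and no cardinality reasoning.
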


\begin{proof}
Recall that in a generic bicategory, identity 1-cells are subterminal
\cite[Prop. 9]{WalkerGeneric}. It then follows immediately from the
mates correspondence that left adjoints and right adjoints are also
subterminal.
\end{proof}

\subsection{Initial generics are a right followed by a left adjoint}

In this paper we do not view adjointness are one of the defining properties
of spans (which is why our characterization makes no mention of adjunctions);
instead the adjoint properties of spans are seen as a consequence
of the underlying generic bicategory structure. The following results
justify this viewpoint.
\begin{lem}
\label{impliesadjoint} Assume $\mathscr{C}$ is a generic bicategory.
Then if a right unitor $\delta\colon l\Rightarrow l;1$ is an initial
generic 2-cell, $l$ must be a right adjoint. Dually, if a left unitor
$\delta\colon r\Rightarrow1;r$ is an initial generic 2-cell, $r$
must be a left adjoint.
\end{lem}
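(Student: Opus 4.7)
The plan is to apply the universal property of the initial generic $\delta\colon l \Rightarrow l;1_Y$ (Axiom \ref{initialgeneric}) to the left unitor $\lambda_l\colon l \Rightarrow 1_X;l$, regarded as a 2-cell $l \Rightarrow a;b$ with $a = 1_X$, $b = l$ and intermediate object $Y' = X$. This produces a generic 2-cell $\eta\colon 1_Y \Rightarrow h;k$ (with $h\colon Y \to X$ and $k\colon X \to Y$) together with 2-cells $\alpha\colon l;h \Rightarrow 1_X$ and $\beta\colon k \Rightarrow l$ (absorbing $k;1_Y \cong k$ into $\beta$), subject to the pasting identity
\[
(1_X;\beta)\cdot(\alpha;k)\cdot(l;\eta)\cdot\delta \;=\; \lambda_l.
\]
The claim will be that $h$ is a left adjoint to $l$, with counit $\epsilon := \alpha$ and unit $u := (h;\beta)\cdot\eta\colon 1_Y \Rightarrow h;l$.

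The first triangle identity should fall out almost mechanically: expanding $u$ and using the middle-four interchange to rewrite $(\alpha;l)\cdot(l;h;\beta)$ as $(1_X;\beta)\cdot(\alpha;k)$, together with the identification $\delta = \rho_l$ forced by the hypothesis, the composite $\rho_l^{-1}\cdot(\alpha;l)\cdot(l;u)\cdot\rho_l$ collapses directly to $\lambda_l^{-1}\cdot\lambda_l = 1_l$.

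The substantive content is showing that $\beta$ is invertible, and equivalently the second triangle identity. The plan is to exploit Axiom \ref{respinit} applied to our factorization: it gives that the identity $k;1_Y \Rightarrow k;1_Y$ is an initial generic, so (modulo the right unitor) $\rho_k\colon k \Rightarrow k;1_Y$ is the initial generic of $k$. I will construct an explicit candidate inverse $\sigma\colon l \Rightarrow k$ as the composite $\lambda_k^{-1}\cdot(\alpha;k)\cdot(l;\eta)\cdot\delta$; the identity $\beta\cdot\sigma = 1_l$ then drops out from naturality of $\lambda$ together with the pasting identity. The reverse composite $\sigma\cdot\beta = 1_k$ is the \emph{main obstacle}: here the plan is to apply the universal property of $\rho_k$ to the left unitor $\lambda_k\colon k \Rightarrow 1_X;k$ to obtain a factorization, and then invoke the uniqueness clause of Axiom \ref{initialgeneric} to match this factorization with the one transported from $(\eta,\alpha,\beta)$ through $\beta$. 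Organising this comparison cleanly, without any circular appeal to Lemma \ref{impliesadjoint}, is the key tactical challenge.

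Once $\beta$ is invertible, the second triangle identity follows by a symmetric pasting computation using genericity of $\eta$ to control 2-cells $1_Y \Rightarrow h;l$, and we conclude $h \dashv l$, so $l$ is a right adjoint. The dual statement for a left unitor initial generic $\delta\colon r \Rightarrow 1_X;r$ is proved by the entirely symmetric argument: apply the universal property of $\delta$ to the right unitor $\rho_r\colon r \Rightarrow r;1_Y$, extract the candidate right-adjoint data on the opposite side, and verify the triangles by the same method, again using Axiom \ref{respinit} to pin down the invertibility of the comparison 2-cell.
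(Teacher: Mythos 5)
Your setup coincides exactly with the paper's: you factor the left unitor $\lambda_l\colon l\Rightarrow 1_X;l$ through the initial generic $\delta=\rho_l$, take counit $\epsilon:=\alpha$ and unit $u:=(h;\beta)\cdot\eta$, and observe that the factorization pasting identity, rewritten by interchange, \emph{is} the triangle identity on $l$; all of that is correct and matches the first half of the paper's proof, as does your verification that $\beta\cdot\sigma=1_l$. The gap is in your treatment of the remaining triangle identity, and it is twofold. First, you invoke Axiom \ref{respinit} (to make $\rho_k\colon k\Rightarrow k;1_Y$ an initial generic), but the lemma is stated for a bare generic bicategory in which one particular unitor happens to be an initial generic: neither Axiom \ref{initialgeneric} as a global hypothesis nor Axiom \ref{respinit} is available here, and the paper's proof deliberately uses neither. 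Second, and more seriously, the step you yourself flag as the ``key tactical challenge'' ($\sigma\cdot\beta=1_k$) is circular as planned: to invoke the uniqueness clause of Axiom \ref{initialgeneric} you must first verify that the data transported through $\beta$, namely $\bigl(\eta,\ \alpha\cdot(\beta;h),\ \rho_k^{-1}\bigr)$, actually constitutes a factorization of $\lambda_k$ through $\rho_k$. Unwinding interchange and naturality of the unitors shows that this transported pasting equals $\lambda_k\cdot\sigma\cdot\beta$ (modulo coherence), so the pasting equation you need holds \emph{if and only if} $\sigma\cdot\beta=1_k$ --- precisely the identity you set out to prove. The uniqueness clause therefore never gets started, and you indicate no way around this. (A further soft spot: the asserted equivalence between invertibility of $\beta$ and the second triangle identity is never justified; only the direction you would actually use --- $\beta$ invertible plus genericity of $u\cong\eta$ forces the $h$-triangle --- is workable, and in any case invertibility of $\beta$ is not needed for the lemma's conclusion.)

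The paper's route avoids all of this and never needs $\beta$ invertible: by Lemma \ref{3ary} the pasting of $\delta$ with $\eta$ is a 3-ary generic $l\Rightarrow l;h;k$, and one then exhibits two pastings onto this 3-ary generic --- one built from $\beta h\cdot\eta$, $\alpha$ and $\beta$, the other from $\beta$ alone --- which agree by the already-established first triangle identity; uniqueness of comparison cells out of a (3-ary) generic then yields the second triangle identity $(h;\epsilon)\cdot(u;h)=1_h$ directly, entirely within the lemma's stated hypotheses. That initiality argument applied to the composite $(l;\eta)\cdot\delta$ itself is the missing idea in your proposal: it trades your unproved invertibility of $\beta$ for a genericity argument that is actually available at this stage of the paper.
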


\begin{proof}
Given that the unitor $\delta\colon l\Rightarrow l;1$ is an initial
generic, we have an induced triple into $1;l$ as below
\[
\xymatrix@=1.5em{ &  & \dtwocell{d}{\delta}\\
X\ar@/^{2pc}/[rrrr]^{l}\ar[r]_{l}\ar@/_{1pc}/[rrd]_{\textnormal{id}} & Y\ar[rr]_{\textnormal{id}}\ar[rd]_{h} & \dtwocell{d}{\eta}\dtwocell[0.55]{dll}{\alpha}\dtwocell[0.55]{drr}{\beta} & Y\ar[r]_{\textnormal{id}} & Y\\
\; & \; & X\ar[ur]_{k}\ar@/_{1pc}/[urr]_{l} & \; & \;
}
\]
We now need to check that $\alpha\colon hl\Rightarrow1$ and $\beta h\cdot\eta\colon1\Rightarrow lh$
define the counit and unit of an adjunction $h\dashv l$. As the above
induced triple of cells is necessarily the identity (up to unitors)
we already have one of the triangle identities. 

Let us now make the observation that the pasting of $\delta$ with
$\eta$ as below is a 3-ary generic cell by Lemma \ref{3ary}
\begin{equation}
\xymatrix@=1.5em{ & \dtwocell[0.3]{d}{\delta} & \dtwocell{d}{\eta}\\
X\ar[r]^{l}\ar@/^{3pc}/[rrr]^{l} & Y\ar[r]^{h}\ar@/^{1.5pc}/[rr]^{\textnormal{id}} & X\ar[r]^{k} & Y
}
\label{3ary}
\end{equation}
and that the above pasted with
\[
\xymatrix@=1.5em{X\ar[r]^{l} & Y\ar[rr]^{\textnormal{id}}\ar@/_{0.5pc}/[rrd]_{h} &  & Y\ar[rr]^{h}\dtwocell{dll}{\beta h\cdot\eta}\dtwocell{dr}{\alpha} &  & X\ar@/^{0pc}/[r]^{k}\ar@/_{2pc}/[r]_{l}\dtwocell{dr}{\beta} & Y\\
 & \; &  & X\ar@/_{0.5pc}/[urr]_{\textnormal{id}}\ar[u]_{l} & \; & \; & \;
}
\]
is equal to, by the previous triangle identity, 

\[
\xymatrix@=1.5em{ & \dtwocell[0.3]{d}{\delta} & \dtwocell{d}{\eta}\\
X\ar[r]^{l}\ar@/^{3pc}/[rrr]^{l} & Y\ar[r]^{h}\ar@/^{1.5pc}/[rr]^{\textnormal{id}} & X\ar[r]^{k}\ar@/_{2pc}/[r]_{l}\dtwocell{dr}{\beta} & Y\\
 &  &  & \;
}
\]
Thus the other triangle identity is recovered from 3-ary genericity
of \eqref{3ary}. The proof of the dual property is similar.
\end{proof}
Since we may paste any initial generic $\delta\colon c\Rightarrow l;r$
with the generic $1\Rightarrow1;1$ and then apply Axiom \ref{respinit},
we may deduce the following.
\begin{cor}
\label{partsareadjoints} Assume $\mathscr{C}$ is a generic bicategory
satisfying axioms \ref{initialgeneric} and \ref{respinit}. Then
for any initial generic $\delta\colon c\Rightarrow l;r$ we have that
$l$ is a right adjoint and $r$ is a left adjoint.
\end{cor}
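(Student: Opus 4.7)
The plan is to deduce the corollary from Lemma \ref{impliesadjoint}: given that lemma, it suffices to exhibit, for the initial generic $\delta \colon c \Rightarrow l;r$ at hand, a right unitor $l \Rightarrow l; 1_Y$ and a left unitor $r \Rightarrow 1_Y; r$ which are themselves initial generic 2-cells. The route, as already hinted in the text preceding the statement, is to paste $\delta$ with the initial generic $1_Y \Rightarrow 1_Y; 1_Y$ guaranteed by Corollary \ref{idinit}, and then to invoke Axiom \ref{respinit}.

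Concretely, I would first note that by Corollary \ref{idinit} the unitor $\eta \colon 1_Y \Rightarrow 1_Y; 1_Y$ is an initial generic. Pasting $\delta$ with $\eta$ in the middle position yields a 2-cell $\gamma \colon c \Rightarrow l; 1_Y; 1_Y; r$, which I would interpret as a factorization of the form \eqref{initfac} with $a = l$, $b = r$, $h = k = 1_Y$, $Y' = Y$, and with $\alpha, \beta$ taken to be the evident unitor isomorphisms. Applying Axiom \ref{respinit} to this factorization then immediately gives that the identity 2-cells $(l; 1_Y) \Rightarrow l; 1_Y$ and $(1_Y; r) \Rightarrow 1_Y; r$ are initial generics.

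Next, I would transport these initial generics across the invertible unitor isomorphisms $l \cong l; 1_Y$ and $r \cong 1_Y; r$, obtaining honest right and left unitor 2-cells $l \Rightarrow l; 1_Y$ and $r \Rightarrow 1_Y; r$ that are initial generic. The fact that the initial-generic property is preserved under pre-composition with an invertible 2-cell is immediate from the universal property stated in Axiom \ref{initialgeneric}: the factorizations of a pre-composed 2-cell are in evident bijection with those of the original. Lemma \ref{impliesadjoint} then yields at once that $l$ is a right adjoint and $r$ is a left adjoint.

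The only step requiring any care is the identification of the pasting of $\delta$ with $\eta$ as a genuine instance of the factorization \eqref{initfac}, including the observation that the resulting ``identity'' initial generics from Axiom \ref{respinit} really do transport across the unitors to give the right/left unitors in the hypothesis of Lemma \ref{impliesadjoint}. Both points are routine but worth spelling out.
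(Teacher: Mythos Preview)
Your proposal is correct and follows essentially the same route as the paper: paste the initial generic $\delta$ with the generic $1_Y \Rightarrow 1_Y;1_Y$ (using Corollary~\ref{idinit}), invoke Axiom~\ref{respinit}, and then apply Lemma~\ref{impliesadjoint}. The paper states this in a single sentence immediately before the corollary; your extra care about transporting the initial-generic property across the unitor isomorphisms is a reasonable elaboration of what the paper leaves implicit.
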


\subsection{Generics out of identities are precisely units of adjunctions}

In a bicategory of spans the (representative) generics out an identity
are of the form $\left(1,1\right)\Rightarrow\left(1,h\right);\left(h,1\right)$
and coincide with the units of the adjunctions. Here we show that
this fact about bicategories of spans follows from Axioms \ref{initialgeneric}
and \ref{respinit}.
\begin{lem}
\label{genericsareunits} Assume $\mathscr{C}$ is a generic bicategory
satisfying axioms \ref{initialgeneric} and \ref{respinit}. Then
every generic 2-cell out of an identity $\delta\colon1\Rightarrow h;k$
is the unit of an adjunction $h\dashv k$.
\end{lem}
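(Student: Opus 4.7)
The plan is to produce a right adjoint $h^{*}$ of $h$ using the axioms, and then to invoke both the genericity of $\delta$ and the uniqueness of 2-cells between adjoints (Proposition \ref{adjcellsunique}) to force $h^{*}\cong k$ in such a way that the transported unit recovers $\delta$.

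First I will apply Axiom \ref{respinit} to the tautological factorization of $\delta$ itself, namely with $c=1_{X}$, initial generic $1_{X}\Rightarrow 1_{X};1_{X}$ (Corollary \ref{idinit}), $\eta=\delta$, and both induced 2-cells equal to identities. This yields that the identities $(1_{X};h)\Rightarrow 1_{X};h$ and $(k;1_{X})\Rightarrow k;1_{X}$ are initial generics, equivalently that the left unitor of $h$ and the right unitor of $k$ are themselves initial generics. Lemma \ref{impliesadjoint} and its dual then produce an adjunction $h\dashv h^{*}$ with unit $\eta_{h}\colon 1_{X}\Rightarrow h;h^{*}$, and show that $k$ is a right adjoint. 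The universal property of $\eta_{h}$ then gives a unique 2-cell $\phi\colon h^{*}\Rightarrow k$ with $\delta=(h*\phi)\cdot\eta_{h}$.

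Next I will exploit the genericity of $\delta$ to extract a 2-cell in the reverse direction. The equation $\delta=(h*\phi)\cdot\eta_{h}$ exhibits $(1_{h},\phi)$ as a morphism $\eta_{h}\to\delta$ in the category of elements of $\mathscr{C}_{X,X}(1_{X},-\circ-)$, so $\eta_{h}$ and $\delta$ lie in the same connected component. Since $\delta$ is generic (initial in its component), there is a unique morphism $(\alpha',\beta')\colon\delta\to\eta_{h}$ consisting of 2-cells $\alpha'\colon h\Rightarrow h$ and $\beta'\colon k\Rightarrow h^{*}$ satisfying $\eta_{h}=(\alpha';\beta')\cdot\delta$. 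By Proposition \ref{adjcellsunique}, the 2-cell $\alpha'\colon h\Rightarrow h$ between left adjoints must be $1_{h}$. Then both composites $\phi\circ\beta'\colon k\Rightarrow k$ and $\beta'\circ\phi\colon h^{*}\Rightarrow h^{*}$ are 2-cells between right adjoints, hence equal to the respective identities by the same proposition. Thus $\phi$ is invertible.

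Transporting the adjunction $h\dashv h^{*}$ along the isomorphism $\phi$ then yields the adjunction $h\dashv k$ whose unit is $(h*\phi)\cdot\eta_{h}=\delta$, as required. The main obstacle is producing the comparison 2-cell $\beta'\colon k\Rightarrow h^{*}$, since a priori there is no reason for any 2-cell in this direction to exist. It is precisely the interplay between the initiality of $\delta$ in the generic category of elements and the forcing $\alpha'=1_{h}$ (via Proposition \ref{adjcellsunique}) that delivers $\beta'$; once one has $\beta'$, its mutual inverse relationship with $\phi$ follows painlessly from the same uniqueness result.
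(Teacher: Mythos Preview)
Your proof is correct and follows essentially the same route as the paper's own proof: you use Corollary~\ref{idinit} together with Axiom~\ref{respinit} to see that $h$ is a left adjoint and $k$ a right adjoint, obtain the comparison $\phi\colon h^{*}\Rightarrow k$ via the universal property of the unit (what the paper phrases as taking the mate of $\delta$), use genericity of $\delta$ to produce a comparison in the other direction, and invoke Proposition~\ref{adjcellsunique} to conclude that $\phi$ is invertible. Your write-up is in fact a bit more explicit than the paper's---you spell out why $\alpha'=1_{h}$, why the two composites of $\phi$ and $\beta'$ must be identities, and how transporting along $\phi$ recovers $\delta$ as the unit---but the argument is the same.
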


\begin{proof}
By Corollary \ref{idinit} and Axiom \ref{respinit} we that know
both $1;h$ and $k;1$ are initial, and so $h$ and $k$ are left
and right adjoints respectively by Lemma \ref{impliesadjoint}. Thus
the 2-cell $\delta\colon1\Rightarrow kh$ has a mate $h^{*}\Rightarrow k$
where $h^{*}$ is a right adjoint to $h$. However, as
\[
\xymatrix@=1.5em{Y\ar[rd]_{h}\ar[rr]^{1} & \dtwocell[0.4]{d}{\eta}\drtwocell[1]{dr}{} & Y\\
 & Y'\ar[ur]_{h^{*}}\ar@/_{2.3pc}/[ru]_{k} & \;
}
\]
is equal to $\delta$, both $\delta$ and $\eta$ lie in the same
connected component. Hence $\eta$ factors through $\delta$ (which
is initial in its connected component by definition), yielding comparisons
$h\Rightarrow h$ and $k\Rightarrow h^{*}$. By Proposition \ref{adjcellsunique},
we must then conclude the 2-cell $h^{*}\Rightarrow k$ is an isomorphism.
\end{proof}
We now use the above lemma to prove its converse.
\begin{lem}
\label{unitimpliesgen} Assume $\mathscr{C}$ is a generic bicategory
satisfying axioms \ref{initialgeneric} and \ref{respinit}. Then
every unit of an adjunction $\nu\colon1\Rightarrow f;g$ is a generic
2-cell out of an identity.
\end{lem}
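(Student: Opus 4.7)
The plan is to factor $\nu$ through a generic 2-cell and then show that the resulting comparisons are invertible, so that $\nu$ itself enjoys the universal property of the generic it factors through. Since $\mathscr{C}$ is a generic bicategory, the 2-cell $\nu\colon 1 \Rightarrow f;g$ factors (essentially uniquely) through a generic $\delta\colon 1 \Rightarrow h;k$ via comparisons $\alpha\colon h \Rightarrow f$ and $\beta\colon k \Rightarrow g$. By Lemma \ref{genericsareunits}, $\delta$ is already known to be the unit of an adjunction $h \dashv k$, so both $h \dashv k$ and $f \dashv g$ are available to feed into the mates correspondence.

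Next I would form the mate $\alpha^{\ast}\colon g \Rightarrow k$ of $\alpha$ under $h \dashv k$ and $f \dashv g$, and the mate $\beta_{\ast}\colon f \Rightarrow h$ of $\beta$. Then $\beta \cdot \alpha^{\ast}\colon g \Rightarrow g$ and $\alpha^{\ast} \cdot \beta\colon k \Rightarrow k$ are 2-cells between right adjoints, and Proposition \ref{adjcellsunique} forces them to equal $1_{g}$ and $1_{k}$ respectively; hence $\beta$ is invertible with inverse $\alpha^{\ast}$. Dually, $\alpha \cdot \beta_{\ast}$ and $\beta_{\ast} \cdot \alpha$ are 2-cells between left adjoints, so again unique, and $\alpha$ is invertible.

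With both $\alpha$ and $\beta$ invertible, $\nu$ and $\delta$ are isomorphic as objects in the category of elements of the presheaf $\mathscr{C}_{X,X}\!\left(1, -\circ-\right)$. Since $\delta$ is initial in its connected component, so is $\nu$, and thus $\nu$ is itself a generic 2-cell.

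The argument has essentially no moving parts beyond the mates step, which is also where I would expect the only mild obstacle: one must verify that the mate operation is set up so that the composites $\beta \cdot \alpha^{\ast}$ and $\alpha^{\ast} \cdot \beta$ are legitimately of the form addressed by Proposition \ref{adjcellsunique} (endomorphisms of right adjoints, rather than 2-cells of the wrong shape). Once this bookkeeping is correct, the statement reduces cleanly to the subterminality of left and right adjoints recorded in that proposition.
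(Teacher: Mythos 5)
Your proof is correct and follows essentially the same route as the paper: factor $\nu$ through a generic $1\Rightarrow h;k$, invoke Lemma \ref{genericsareunits} to get $h\dashv k$, take mates $\alpha^{*}\colon g\Rightarrow k$ and $\beta_{*}\colon f\Rightarrow h$, and conclude invertibility of the comparisons from Proposition \ref{adjcellsunique}. The only cosmetic difference is that the paper obtains the factorization by running Axiom \ref{initialgeneric} against the initial generic $1_{X}\Rightarrow 1_{X};1_{X}$ of Corollary \ref{idinit}, whereas you factor directly through a generic using the definition of a generic bicategory; these yield the same data up to unitors.
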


\begin{proof}
We again recall Corollary \ref{idinit}, and then factor a unit $\nu\colon1\Rightarrow f;g$
as
\[
\xymatrix@=1.5em{ &  & \dtwocell{d}{\delta}\\
X\ar@/^{2pc}/[rrrr]^{1}\ar[r]_{1}\ar@/_{1pc}/[rrd]_{f} & X\ar[rr]_{1}\ar[rd]_{h} & \dtwocell{d}{\eta}\dtwocell[0.55]{dll}{\alpha}\dtwocell[0.55]{drr}{\beta} & X\ar[r]_{1} & X.\\
\; & \; & Y\ar[ur]_{k}\ar@/_{1pc}/[urr]_{g} & \; & \;
}
\]
Now, noting by Lemma \ref{genericsareunits} that $h\dashv k$, we
have the mates of $\alpha$ and $\beta$ given by $\alpha^{*}\colon g\Rightarrow k$
and $\beta_{*}\colon f\Rightarrow h$, and these are inverse to $\beta$
and $\alpha$ respectively by Proposition \ref{adjcellsunique}.
\end{proof}

\subsection{Adjoint composites are initial}

Whilst we know by Corollary \ref{partsareadjoints} that initial generics
$c\cong l;r$ yield factorizations of any 1-cell $c$ as a right adjoint
followed by a left adjoint, thus giving a way of mapping 1-cells into
spans, we would like this assignation to be essentially surjective.
The following lemma ensures this property.
\begin{lem}
\label{compinit} Assume $\mathscr{C}$ is a generic bicategory satisfying
axioms \ref{initialgeneric} and \ref{respinit}. Then for any composable
right adjoint $s^{*}$ and left adjoint $t$, the composite $s^{*};t$
is initial.
\end{lem}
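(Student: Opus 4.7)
The plan is to show that the identity 2-cell $1_{s^*;t}\colon s^*;t \Rightarrow s^*;t$ itself is an initial generic. The key idea is that, after transposing across the adjunctions $s \dashv s^*$ and $t \dashv t^*$ in $\mathscr{C}$, the universal property demanded of an initial generic at $s^*;t$ reduces to the one already known to hold at $1_W$ by Corollary \ref{idinit}, where $W$ is the intermediate object between $s^*\colon X \to W$ and $t\colon W \to Z$.

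Given any 2-cell $\gamma\colon s^*;t \Rightarrow a;b$, I would take its double mate through $s \dashv s^*$ on the left and $t \dashv t^*$ on the right to obtain $\bar\gamma\colon 1_W \Rightarrow s;a;b;t^*$, which I view as a 2-cell $1_W \Rightarrow (s;a);(b;t^*)$. By Corollary \ref{idinit}, the unitor $1_W \Rightarrow 1_W;1_W$ is an initial generic of $1_W$, so Axiom \ref{initialgeneric} applied to $\bar\gamma$ produces a generic 2-cell $\eta\colon 1_W \Rightarrow h;k$ out of the identity (which by Lemma \ref{genericsareunits} is the unit of an adjunction $h \dashv k$) together with 2-cells $\alpha'\colon h \Rightarrow s;a$ and $\beta'\colon k \Rightarrow b;t^*$ whose pasting recovers $\bar\gamma$. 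Transposing $\alpha'$ and $\beta'$ back through $s \dashv s^*$ and $t \dashv t^*$ yields $\alpha\colon s^*;h \Rightarrow a$ and $\beta\colon k;t \Rightarrow b$, and the triple $(\eta,\alpha,\beta)$ together with $\delta := 1_{s^*;t}$ is the sought factorization of $\gamma$ in the shape of \eqref{initfac}.

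That the pasting really reproduces $\gamma$ follows from naturality of the mates correspondence: the double mate of the pasting of $\alpha$, $\beta$ and $s^*;\eta;t$ coincides with the pasting of $\alpha'$, $\beta'$ and $\eta$ which was constructed to equal $\bar\gamma$, so $\gamma$ equals the pasting by bijectivity of mates. The uniqueness clause of Axiom \ref{initialgeneric} transfers identically: a second factorization of $\gamma$ through $1_{s^*;t}$ mates to a second factorization of $\bar\gamma$ through the initial generic of $1_W$, and the coherent comparisons provided by Axiom \ref{initialgeneric} for $1_W$ transfer back under mates. Finally, $1_{s^*;t}$ is trivially invertible and, under the bijection $\mathscr{C}(s^*;t,\, a;b) \cong \sum_{h \dashv k}\mathscr{C}(s^*;h,\,a)\cdot\mathscr{C}(k;t,\,b)$ so produced, it represents the summand indexed by the identity adjunction $1 \dashv 1$, so is in particular a generic 2-cell. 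The main technical obstacle is the bookkeeping verification that mates carry the pasting shape of \eqref{initfac} into the pasting shape that produced $\bar\gamma$, but this is a direct consequence of the triangle identities for $s \dashv s^*$ and $t \dashv t^*$ combined with the mates correspondence recalled in the preliminaries.
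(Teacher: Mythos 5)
Your proof is essentially correct, but it takes a genuinely different route from the paper. The paper's proof is a two-line double application of Axiom \ref{respinit}: first it factors the unit $\eta_t\colon 1_T\Rightarrow t;t^{*}$ through the initial generic $1_T\Rightarrow 1_T;1_T$ of Corollary \ref{idinit} (this is a factorization of shape \eqref{initfac} with $l=r=1$ and $\alpha,\beta$ identities, since units are generic), so Axiom \ref{respinit} immediately yields that $1;t$ is initial; it then factors the whiskered unit $t\cong 1;t\Rightarrow(1;s);(s^{*};t)$ through the initial generic $t\Rightarrow 1;t$ just obtained, and a second application of Axiom \ref{respinit} yields that $s^{*};t$ is initial. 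You instead bypass Axiom \ref{respinit} entirely and verify the universal property of Axiom \ref{initialgeneric} for $\delta=1_{s^{*};t}$ by hand, transporting factorizations along the mates correspondence to the known initial generic at $1_T$. Your transfer computations are sound: the identity ``double mate of $(\alpha;\beta)\cdot(s^{*};\eta;t)$ equals $(\alpha';\beta')\cdot\eta$'' holds, though it is really middle-four interchange together with bijectivity of mates rather than the triangle identities, and both existence and the uniqueness clause do pass back and forth as you claim (your citation of Lemma \ref{genericsareunits} is inessential --- you only need $\eta$ generic, which the factorization at $1_T$ already provides). Interestingly, this means your argument uses only the generic bicategory structure plus Corollary \ref{idinit}, so it proves the statement under weaker hypotheses than the paper's proof; the price is considerably more verification.

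The one compressed step deserving scrutiny is your final claim that $1_{s^{*};t}$ is \emph{generic}, which is part of the definition of an initial generic and is asserted in one sentence via ``represents the summand indexed by the identity adjunction.'' This can be made rigorous, but it is not automatic: one must argue that the equivalence class of the generic $\eta\colon 1_T\Rightarrow h;k$ occurring in a factorization is constant along morphisms of the category of elements (this follows from the uniqueness clause you established), so that any $\gamma$ in the connected component of $1_{s^{*};t}$ admits a factorization with $\eta$ isomorphic to the unitor generic, whence a morphism $1_{s^{*};t}\to\gamma$ exists and is unique by genericity of the unitor. Without this, you have only shown the factorization property, not initiality in the connected component; with it, your proof is complete.
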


\begin{proof}
We first note that for any left adjoint $t$, the composite $1;t$
is initial, which is seen by recalling Corollary \ref{idinit} and
applying Axiom \ref{respinit} to the diagram
\[
\xymatrix@=1.5em{ & T\ar[r]^{1} & T\ar[rr]^{1}\ar[rd]_{t} & \dtwocell[0.4]{d}{\eta} & T\ar[r]^{1} & T.\\
\; & \; & \; & Y\ar[ur]_{t^{*}} & \; & \;
}
\]
By then applying Axiom \ref{respinit} to the diagram
\[
\xymatrix@=1.5em{ & T\ar[r]^{1} & T\ar[rr]^{1}\ar[rd]_{s} & \dtwocell[0.4]{d}{\eta'} & T\ar[r]^{t} & Y\\
\; & \; & \; & X\ar[ur]_{s^{*}} & \; & \;
}
\]
we see that $s^{*};t$ is initial.
\end{proof}

\subsection{The left adjoints form a 1-category}

Given a bicategory of spans $\cat{Span}\left(\mathcal{E}\right)$,
one may recover $\mathcal{E}$ as its category of left adjoints. However,
the left adjoints only form a 1-category since any 2-cell between
left adjoints is both unique and invertible. We have already partially
verified this property, showing uniqueness of such 2-cells in Proposition
\ref{adjcellsunique}, thus it just remains to check they are invertible.
\begin{lem}
\label{adjointcellsinvert} Assume $\mathscr{C}$ is a generic bicategory
satisfying axioms \ref{initialgeneric} and \ref{respinit}. Then
any 2-cell $\alpha\colon f_{1}\Rightarrow f_{2}$ between left adjoints
$f_{1}$ and $f_{2}$ is invertible.
\end{lem}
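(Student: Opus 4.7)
My plan is to exploit the fact (Lemma \ref{unitimpliesgen}) that the unit of any adjunction is a generic 2-cell out of the identity, and hence is initial in its connected component of the appropriate category of elements. Choose right adjoints $g_1,g_2$ with $f_i\dashv g_i$ and write $\eta_i\colon 1\Rightarrow f_i;g_i$ for the units and $\epsilon_i$ for the counits, so that both $\eta_1$ and $\eta_2$ are initial in their respective connected components of the category of elements of $\mathscr{C}_{X,X}(1_X,-\circ-)$ on $\mathscr{C}_{Y,X}\times\mathscr{C}_{X,Y}$ (writing $f_i\colon X\to Y$). If I can establish that $\eta_1$ and $\eta_2$ lie in the \emph{same} connected component, then as parallel initial objects they will be isomorphic, yielding invertible 2-cells $u\colon f_1\Rightarrow f_2$ and $v\colon g_1\Rightarrow g_2$. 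Proposition \ref{adjcellsunique} then forces $u=\alpha$, so $\alpha$ itself is invertible.

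To place $\eta_1$ and $\eta_2$ in a common component, I form the mate $\alpha^{*}\colon g_2\Rightarrow g_1$ of $\alpha$ and consider the bridge 2-cell
\[
\xi := (\alpha;g_1)\cdot\eta_1\colon 1\Rightarrow f_2;g_1.
\]
By construction $(\alpha,1_{g_1})$ is a morphism $\eta_1\to\xi$ in the category of elements, and the standard mate identity
\[
(\alpha;g_1)\cdot\eta_1 \;=\; (f_2;\alpha^{*})\cdot\eta_2
\]
exhibits the same $\xi$ as the target of a morphism $\eta_2\to\xi$ given by $(1_{f_2},\alpha^{*})$. Consequently $\eta_1,\xi,\eta_2$ all sit in one connected component, which is the only ingredient needed to complete the argument above.

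The step requiring the most care is the verification of this mate identity, although it is a standard calculation: expand $\alpha^{*}=(\epsilon_2;g_1)\cdot(g_2;\alpha;g_1)\cdot(g_2;\eta_1)$, apply the middle-four interchange to rewrite $(f_2;g_2;\eta_1)\cdot\eta_2$ as $(\eta_2;f_1;g_1)\cdot\eta_1$ and to move $\alpha$ past $\eta_2$, and finally invoke the triangle identity $(f_2;\epsilon_2)\cdot(\eta_2;f_2)=1_{f_2}$ for $f_2\dashv g_2$ to collapse the leftmost two whiskered factors. The real conceptual move, namely recognising units of adjunctions as precisely the universal generics needed to bridge two different left-adjoint decompositions of the same identity 1-cell, is where the proof acquires its content.
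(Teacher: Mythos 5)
Your proof is correct, and it takes a genuinely different route from the paper's. The paper never uses Lemma \ref{unitimpliesgen} here: it instead invokes Lemma \ref{compinit} to see that the composite $g_{2};f_{1}$ is an initial generic, writes down a single 2-cell $g_{2};f_{1}\Rightarrow1;1$ built from the counits and $\alpha$, factors it through that initial generic in two ways (once with $\eta_{1}$ as the middle generic, once with $\eta_{2}$), and extracts the comparison isomorphism $f_{1}\cong f_{2}$ from the uniqueness clause of Axiom \ref{initialgeneric}. You stay entirely at the identity 1-cell in $\mathscr{C}_{X,X}$: Lemma \ref{unitimpliesgen} makes $\eta_{1}$ and $\eta_{2}$ generic, hence initial in their connected components of the category of elements, and your cospan $(\alpha,1_{g_{1}})\colon\eta_{1}\to\xi$, $(1_{f_{2}},\alpha^{*})\colon\eta_{2}\to\xi$ --- justified by the mate identity $(\alpha;g_{1})\cdot\eta_{1}=(f_{2};\alpha^{*})\cdot\eta_{2}$, whose verification via two interchanges and one triangle identity is correct --- places them in a common component, so that as two initial objects of the same component they are isomorphic, with componentwise invertible comparisons. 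Both arguments then close identically, using Proposition \ref{adjcellsunique} to force $\alpha$ to equal the invertible comparison $f_{1}\Rightarrow f_{2}$. What your version buys: it needs only the bare definition of genericity, the mates correspondence, and Lemma \ref{unitimpliesgen} (which precedes this lemma in the paper and does not depend on it, so there is no circularity), avoiding Lemma \ref{compinit} and any fresh appeal to the uniqueness clause of Axiom \ref{initialgeneric} --- though no axiom is actually saved, since Lemma \ref{unitimpliesgen} consumes both axioms through Lemma \ref{genericsareunits}. What the paper's version buys: it rehearses the two-factorizations-through-an-initial-generic technique at a composite 1-cell, the same pattern that is reused in the essential-surjectivity step of the main theorem.
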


\begin{proof}
Suppose $g_{1}$ and $g_{2}$ are respective right adjoints to $f_{1}$
and $f_{2}$. We then have an equality\vspace*{-0.2cm}
\[
\xymatrix@=1.5em{X\ar[r]^{g_{2}}\ar@/_{1pc}/[rrd]_{1} & Y\ar[rr]^{1}\ar@/^{0.7pc}/[rd]|-{f_{1}}\ar@/_{0.7pc}/[rd]|-{f_{2}} & \dtwocell{d}{\eta_{1}}\dtwocell[0.6]{dll}{\epsilon_{2}}\dtwocell[0.5]{drr}{\epsilon_{1}}\dltwocell{ld}{\alpha} & Y\ar[r]^{f_{1}} & X & \ar@{}[rd]|-{=} &  & X\ar[r]^{g_{2}}\ar@/_{1pc}/[rrd]_{1} & Y\ar[rr]^{1}\ar[rd]_{f_{2}} & \dtwocell{d}{\eta_{2}}\dtwocell[0.6]{dll}{\epsilon_{2}}\dtwocell[0.5]{drr}{\epsilon_{2}} & Y\ar@/^{0.5pc}/[r]^{f_{1}}\ar@/_{0.5pc}/[r]_{f_{2}}\dtwocell{r}{\alpha} & X\\
\; & \; & X\ar[ur]_{g_{1}}\ar@/_{1pc}/[urr]_{1} & \; & \; &  & \; & \; & \; & X\ar[ur]_{g_{2}}\ar@/_{1pc}/[urr]_{1} & \; & \;
}
\]
yielding, as $g_{2};f_{1}$ is initial by Lemma \ref{compinit}, an
isomorphism $f_{1}\cong f_{2}$ from uniqueness of such factorizations,
thus showing $\alpha$ is invertible by Proposition \ref{adjcellsunique}. 
\end{proof}
Since any 2-cell between left adjoints is both unique by Proposition
\ref{adjcellsunique} and invertible by Lemma \ref{adjointcellsinvert}
we have the following.
\begin{cor}
\label{1caty} Assume $\mathscr{C}$ is a generic bicategory satisfying
axioms \ref{initialgeneric} and \ref{respinit}. Then the sub-bicategory
of left adjoint 1-cells in $\mathscr{C}$ is equivalent to a locally
discrete 2-category $\mathcal{E}$ whose morphisms are equivalence
classes of left adjoints.
\end{cor}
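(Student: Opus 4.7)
The plan is to construct $\mathcal{E}$ explicitly and exhibit the biequivalence with the sub-bicategory $\L\subseteq\mathscr{C}$ spanned by left adjoints. Take $\operatorname{ob}\mathcal{E}=\operatorname{ob}\mathscr{C}$, and let $\mathcal{E}(X,Y)$ be the set of equivalence classes $[f]$ of left adjoint 1-cells $f\colon X\to Y$, where $f\sim g$ iff there exists a 2-cell $f\Rightarrow g$ in $\mathscr{C}$. By Proposition \ref{adjcellsunique} such a 2-cell (if it exists) is unique, and by Lemma \ref{adjointcellsinvert} it is invertible, whence $\sim$ is at once reflexive, symmetric, and transitive. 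Define composition by $[g]\cdot[f]:=[g;f]$; this uses the standard fact that composites of left adjoints are left adjoints, and is well-defined since horizontal pasting of the unique invertible 2-cells $f\Rightarrow f'$ and $g\Rightarrow g'$ produces an invertible (hence witnessing) 2-cell $g;f\Rightarrow g';f'$.

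Next I would check the 1-category axioms for $\mathcal{E}$. The associator $(h;g);f\Rightarrow h;(g;f)$ and the unitors in $\mathscr{C}$ are invertible 2-cells between composites of left adjoints; these composites are again left adjoints, so by Proposition \ref{adjcellsunique} and Lemma \ref{adjointcellsinvert} they bear at most one 2-cell between them, which forces the associator and unitors to descend to equalities of equivalence classes. Thus $\mathcal{E}$ is an honest 1-category; viewed as a locally discrete 2-category, its hom-categories are the discrete categories on the sets $\mathcal{E}(X,Y)$.

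Finally I would assemble the comparison 2-functor $F\colon\mathcal{E}\to\L$ by choosing, via the axiom of choice, a representative 1-cell in each equivalence class; on 2-cells there is nothing to specify because the source is locally discrete, and all required coherence 2-cells in $\L$ (pseudofunctoriality constraints) exist uniquely by Proposition \ref{adjcellsunique} and are automatically invertible and coherent by the same uniqueness. The functor $F$ is bijective on objects and, on each hom-category, factors as the discrete category $\mathcal{E}(X,Y)$ of equivalence classes of $\L(X,Y)$ followed by the inclusion of chosen representatives; since $\L(X,Y)$ is a groupoid with all parallel 2-cells equal (hence equivalent to its set of connected components), this inclusion is an equivalence of categories. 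Hence $F$ is a biequivalence. The only non-routine point is checking that the coherence data of $F$ is automatic, and that is exactly what Proposition \ref{adjcellsunique} purchases for us; no further obstacle arises.
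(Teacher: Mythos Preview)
Your proof is correct and follows exactly the approach the paper intends: the corollary is stated immediately after observing that 2-cells between left adjoints are unique (Proposition~\ref{adjcellsunique}) and invertible (Lemma~\ref{adjointcellsinvert}), and you have simply spelled out the standard construction of the quotient 1-category and the biequivalence that these two facts yield. The only quibble is notational: since the paper uses $;$ for diagrammatic composition, your formula $[g]\cdot[f]:=[g;f]$ should read $[f;g]$ (with $f\colon X\to Y$ and $g\colon Y\to Z$).
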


\subsection{Initial generics paste with generics to give generics}

The following shows that any initial generic $c\Rightarrow l;r$ pasted
with a generic out an identity $1\Rightarrow h;k$ yields a generic
$c\Rightarrow\left(l;h\right);\left(k;r\right)$. With this proven
it will then follow that when a 2-cell $\gamma\colon c\Rightarrow a;b$
is factored through an initial generic as in \eqref{initfac}, the
induced comparisons $\alpha$ and $\beta$ must be invertible whenever
$\gamma$ is generic.
\begin{lem}
\label{compgen} Suppose $\mathscr{C}$ is a generic bicategory, and
that a 1-cell $c$ admits an initial generic $c\cong l;r$ through
an object $Y$. Then for any generic out an identity $1_{Y}\Rightarrow h;k$
the pasting $c\Rightarrow\left(l;h\right);\left(k;r\right)$ is a
2-ary generic.
\end{lem}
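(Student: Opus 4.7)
The plan is to verify directly that the pasting $p\colon c \Rightarrow (l;h);(k;r)$ of $\delta$ with $\eta$ is initial in its connected component of the category of elements of $\mathscr{C}_{X,Z}(c,-;-)$, which is precisely the definition of $p$ being 2-ary generic. Given an arbitrary $\phi\colon c \Rightarrow A;B$ in the same connected component as $p$, I would first factor $\phi$ through the initial generic $\delta$ using Axiom \ref{initialgeneric}, obtaining data $(\eta_\phi,\alpha_\phi,\beta_\phi)$ in which $\eta_\phi\colon 1_Y \Rightarrow h_\phi;k_\phi$ is generic and $\alpha_\phi\colon l;h_\phi \Rightarrow A$, $\beta_\phi\colon k_\phi;r \Rightarrow B$ are the comparison 2-cells.

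The crux of the existence argument is the sub-claim that $\eta_\phi$ and $\eta$ lie in the same connected component of the category of elements of $\mathscr{C}_{Y,Y}(1_Y,-;-)$. To establish this, observe that any morphism $(u,v)\colon \phi_1 \to \phi_2$ in the category of elements of $\mathscr{C}(c,-;-)$ produces two factorizations of $\phi_2$ through $\delta$: the intrinsic one via $(\eta_{\phi_2},\alpha_{\phi_2},\beta_{\phi_2})$, and one obtained by post-composing the factorization of $\phi_1$ with $(u,v)$, still using the generic $\eta_{\phi_1}$. The essential uniqueness clause of Axiom \ref{initialgeneric} then exhibits $\eta_{\phi_1}$ and $\eta_{\phi_2}$ as isomorphic in their category of elements, so propagating along a zig-zag connecting $\phi$ to $p$ --- and noting that $p$'s own generic-out-of-$1_Y$ factorization is $\eta$ itself --- yields $\eta_\phi \sim \eta$. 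Genericity of $\eta$ then produces 2-cells $\mu\colon h \Rightarrow h_\phi$ and $\nu\colon k \Rightarrow k_\phi$ whose pasting with $\eta$ recovers $\eta_\phi$; pasting these with $\alpha_\phi$ and $\beta_\phi$ yields comparisons $\alpha\colon l;h \Rightarrow A$ and $\beta\colon k;r \Rightarrow B$ giving the desired factorization of $\phi$ through $p$.

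For uniqueness, any two factorizations of $\phi$ through $p$ unfold into two factorizations of $\phi$ through $\delta$ that use the same generic $\eta$, with the two pairs $(\alpha_i,\beta_i)$ as the comparisons. Passing to a representative generic in the equivalence class of $\eta$, the strict uniqueness of factorizations through the initial generic with a fixed representative --- as discussed in the paragraph following Axiom \ref{initialgeneric} --- forces $\alpha_1 = \alpha_2$ and $\beta_1 = \beta_2$.

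The main obstacle I expect is the transport of connected components in the sub-claim above, namely verifying that morphisms in the 2-ary category of elements of $c$ descend to isomorphisms of the associated generic factorizations out of $1_Y$. Once this compatibility is pinned down through the essential uniqueness clause of Axiom \ref{initialgeneric}, the remaining existence and uniqueness arguments reduce to routine manipulation of the genericity of $\eta$ and the uniqueness of factorizations through the initial generic $\delta$.
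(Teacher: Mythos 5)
Your proposal is correct, but it follows a genuinely different route from the paper's proof. You verify initiality of the pasting $p\colon c\Rightarrow\left(l;h\right);\left(k;r\right)$ from first principles: for any 2-cell $\gamma$ in the connected component of $p$, you factor $\gamma$ through the initial generic $\delta$ via Axiom \ref{initialgeneric}, and then establish the key sub-claim that the resulting generic $\eta_{\gamma}\colon 1_{Y}\Rightarrow h_{\gamma};k_{\gamma}$ is isomorphic to $\eta$, by propagating along a zig-zag from $\gamma$ to $p$ --- each edge of the zig-zag produces two factorizations of a common 2-cell through $\delta$, so the essential uniqueness clause of Axiom \ref{initialgeneric} applies edge by edge, with the base case supplied by the tautological factorization of $p$ through $\delta$ via $\eta$ itself. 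Initiality of $\eta$ then yields the comparison $p\to\gamma$, and the uniqueness clause (with $\eta$ fixed on both sides) yields its uniqueness. The paper instead exploits the ambient generic bicategory structure: it factors the pasting through an honest generic $\xi\colon c\Rightarrow L;R$ with comparisons $\theta$ and $\phi$, factors $\xi$ back through $\delta$ with a \emph{representative} generic $\eta'$, deduces from strict uniqueness with representatives that $\left(h,k,\eta\right)=\left(h',k',\eta'\right)$ and that $\theta\alpha$ and $\phi\beta$ are identities, and then obtains that $\alpha\theta$ and $\beta\phi$ are identities from genericity of $\xi$, so that $\theta$ and $\phi$ are invertible. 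The paper's route is shorter because proving genericity reduces to showing two comparison 2-cells are invertible; yours is more self-contained and makes explicit exactly which instances of the uniqueness clause are invoked, at the cost of the zig-zag bookkeeping (the step you rightly flag, and which does go through as you describe). One small simplification to your uniqueness step: passing to a representative of $\eta$ is unnecessary --- when both factorizations of $\gamma$ use the same $\eta$, the comparison isomorphisms coherent with $\eta$ are unique by genericity, hence are identities, which forces $\alpha_{1}=\alpha_{2}$ and $\beta_{1}=\beta_{2}$ directly.
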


\begin{proof}
Consider a 1-cell $c$ with initial generic $l;r$ pasted with a generic
2-cell $\eta\colon1\Rightarrow h;k$ as on the left below. There is
no loss in generality assuming $\eta$ is a representative generic.
\[
\xymatrix@=1.5em{ &  & \dtwocell{d}{\delta} &  &  &  &  &  &  & \dtwocell{d}{\xi}\\
X\ar@/^{2pc}/[rrrr]^{c}\ar[r]_{l} & Y\ar[rr]_{\textnormal{id}}\ar[rd]_{h} & \dtwocell{d}{\eta} & Y\ar[r]_{r} & Z &  &  & X\ar[rr]_{L}\ar@/^{2pc}/[rrrr]^{c}\ar@/_{2pc}/[rr]_{l;h} & \dtwocell{d}{\theta} & Y'\ar[rr]_{R}\ar@/_{2pc}/[rr]_{k;r} & \dtwocell{d}{\phi} & Z\\
\; & \; & Y'\ar[ur]_{k} & \; & \; &  &  & \; & \; &  & \; & \;
}
\]
We check that $c\Rightarrow\left(l;h\right);\left(k;r\right)$ is
generic. To see this, we first factor this pasting through a generic
2-cell $\xi$ as on the right above. It remains to check the induced
comparison maps $\theta$ and $\phi$ are invertible.

We proceed by factoring $\xi$ through the initial generic as
\[
\xymatrix@=1.5em{ &  &  &  & \dtwocell{d}{\delta}\\
\xi & = & X\ar@/^{2pc}/[rrrr]^{c}\ar[r]_{l}\ar@/_{1pc}/[rrd]_{L} & Y\ar[rr]_{\textnormal{id}}\ar[rd]_{h'} & \dtwocell{d}{\eta'}\dtwocell[0.55]{dll}{\alpha}\dtwocell[0.55]{drr}{\beta} & Y\ar[r]_{r} & Z\\
 &  & \; & \; & Y'\ar[ur]_{k'}\ar@/_{1pc}/[urr]_{R} & \; & \;
}
\]
where we chose $\eta'$ to be a representative generic. This gives
the equality
\[
\xymatrix@=1.5em{ &  & \dtwocell{d}{\delta} &  &  &  &  &  & \dtwocell{d}{\delta}\\
X\ar@/^{2pc}/[rrrr]^{c}\ar[r]_{l}\ar@/_{1pc}/[rrd]_{L}\ar@/_{3.5pc}/[rrd]_{l;h} & Y\ar[rr]_{\textnormal{id}}\ar[rd]_{h'} & \dtwocell{d}{\eta'}\dtwocell[0.55]{dll}{\alpha}\dtwocell[0.55]{drr}{\beta} & Y\ar[r]_{r} & Z &  & X\ar@/^{2pc}/[rrrr]^{c}\ar[r]_{l} & Y\ar[rr]_{\textnormal{id}}\ar[rd]_{h} & \dtwocell{d}{\eta} & Y\ar[r]_{r} & Z\\
\; & \;\dtwocell{d}{\theta} & Y'\ar[ur]_{k'}\ar@/_{1pc}/[urr]_{R}\ar@/_{3.5pc}/[rru]_{k;r} & \;\dtwocell{d}{\phi} & \; & = & \; & \; & Y'\ar[ur]_{k} & \; & \;\\
 & \; &  & \;
}
\]
so that by uniqueness $\left(h,k,\eta\right)$ and $\left(h',k',\eta'\right)$
are equal (using that Axiom \ref{initialgeneric} gives strict uniqueness
when one requires $\eta$ and $\eta'$ to be representative) and $\theta\alpha$
and $\varphi\beta$ are identities. Pasting both sides above with
$\alpha$ and $\beta$ gives the equality
\[
\xymatrix@=1.5em{ &  & \dtwocell{d}{\xi} &  &  &  &  &  & \dtwocell{d}{\xi}\\
X\ar[rr]_{L}\ar@/^{2pc}/[rrrr]^{c}\ar@/_{2pc}/[rr]_{L} & \dtwocell[0.55]{d}{\alpha\theta} & Y'\ar[rr]_{R}\ar@/_{2pc}/[rr]_{R} & \dtwocell[0.55]{d}{\beta\phi} & Z & = & X\ar[rr]_{L}\ar@/^{2pc}/[rrrr]^{c} &  & Y'\ar[rr]_{R} &  & Z\\
\; & \; &  & \; & \; &  & \; & \; &  & \; & \;
}
\]
so that $\alpha\theta$ and $\beta\phi$ are also identities by genericity
of $\xi$.
\end{proof}

\subsection{Generics are indexed by classes of left adjoint 1-cells}

One of the more interesting properties of a generic bicategory $\mathscr{C}$
is that the 2-cells in $\mathscr{C}$ determine composition in $\mathscr{C}$.
More explicitly, one can define for each triple of objects $X,Y,Z$
and 1-cell $c\colon X\to Z$ in $\mathscr{C}$, the set $\mathfrak{M}_{c}^{X,Y,Z}$
of equivalence classes of generic 2-cells $c\Rightarrow l;r$ factoring
through $Y$. It is then not hard to see that this defines a presheaf
$\mathfrak{M}_{\left(-\right)}^{X,Y,Z}\colon\mathscr{C}_{X,Z}\to\mathbf{Set}$.
Moreover, given an element of this presheaf (that is a generic 2-cell
$\delta\colon c\Rightarrow l;r$) we have the canonical projections
$\delta\mapsto\left(l,r\right)$ and $\delta\mapsto c$ yielding functors
as below
\[
\xymatrix@=1.5em{\mathscr{C}_{X,Y}\times\mathscr{C}_{Y,Z}\ar@{..>}[rr] & \ltwocell[0.4]{d}{} & \mathscr{C}_{X,Z}.\\
 & \el\mathfrak{M}_{\left(-\right)}^{X,Y,Z}\ar[ur]\ar[ul]
}
\]
It then a fact about generic bicategories that the composition functor
defines an absolute left extension above\footnote{Actually, this is an instance of a more general fact which is true
for any familial functor.}. Thus if we have a description of the generic 2-cells, or at least
a way of keeping track of them, we may deduce the composition operation
within our bicategory. It will therefore be useful to find a way of
indexing our generic 2-cells, in the hope of later using this indexing
to show composition in $\mathscr{C}$ is by pullback.
\begin{lem}
\label{genericsarewhiskers} Assume $\mathscr{C}$ is a generic bicategory
satisfying axioms $\ref{initialgeneric}$ and \ref{respinit}. Suppose
$c$ is a 1-cell with initial generic $\delta\colon c\Rightarrow l;r$.
Then a 2-cell $c\Rightarrow a;b$ in $\mathscr{C}$ is generic, if
and only if it is (isomorphic to) a whiskering of a unit $\eta$ of
an adjunction by $l$ and $r$.
\end{lem}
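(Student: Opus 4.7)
The plan is to prove both directions by combining Axiom~\ref{initialgeneric} with Lemmas~\ref{unitimpliesgen}, \ref{genericsareunits}, and \ref{compgen}; together these identify generics out of identities with units of adjunctions and control how such generics paste with the initial generic $\delta$.

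For the ``if'' direction, suppose we are given a unit $\eta\colon 1_{Y}\Rightarrow h;k$ of an adjunction $h\dashv k$, and consider the 2-cell $c\Rightarrow (l;h);(k;r)$ obtained by whiskering $\eta$ by $l$ and $r$ and composing with $\delta$ (which is invertible by Axiom~\ref{initialgeneric}). By Lemma~\ref{unitimpliesgen}, this $\eta$ is a generic 2-cell out of $1_{Y}$, and Lemma~\ref{compgen} then applies directly to the pasting of $\delta$ with $\eta$, so the resulting 2-cell $c\Rightarrow (l;h);(k;r)$ is a 2-ary generic as required.

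Conversely, let $\gamma\colon c\Rightarrow a;b$ be generic. Apply Axiom~\ref{initialgeneric} to factor $\gamma$ through $\delta$, obtaining a generic $\eta\colon 1_{Y}\Rightarrow h;k$ out of the identity together with 2-cells $\alpha$ and $\beta$. By Lemma~\ref{genericsareunits}, $\eta$ is the unit of an adjunction $h\dashv k$. By Lemma~\ref{compgen}, the composite pasting $c\Rightarrow (l;h);(k;r)$ is itself a 2-ary generic. The pair $(\alpha,\beta)$ now constitutes a morphism from this generic pasting to $\gamma$ in the relevant category of elements, so both sit in the same connected component; as each is generic (i.e.\ initial in its connected component), $(\alpha,\beta)$ must be invertible. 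Hence $\gamma$ is (isomorphic to) the whiskering of the unit $\eta$ by $l$ and $r$.

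The only substantive point is the invertibility of $(\alpha,\beta)$ in the converse direction. Without Lemma~\ref{compgen} nothing would force these comparisons to be invertible when the starting 2-cell is itself generic, so the entire argument hinges on already having established that pasting an initial generic with a generic out of an identity yields a 2-ary generic.
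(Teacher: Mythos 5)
Your proof is correct and takes essentially the same route as the paper: both directions factor through the initial generic and hinge on Lemma~\ref{compgen}, with Lemmas~\ref{genericsareunits} and \ref{unitimpliesgen} identifying the generic out of the identity with a unit of an adjunction. You merely spell out the invertibility step (a morphism between two initial objects in the same connected component of the category of elements must be invertible) that the paper states tersely as ``the induced $\alpha$ and $\beta$ are invertible, as $c\Rightarrow\left(l;h\right);\left(k;r\right)$ is generic.''
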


\begin{proof}
$\left(\Rightarrow\right)\colon$ It is clear that any generic 2-cell
$c\Rightarrow a;b$ is a whiskering of a unit by $l$ and $r$, since
we may factor such a 2-cell through the initial generic as
\[
\xymatrix@=1.5em{ &  & \dtwocell{d}{\delta}\\
X\ar@/^{2pc}/[rrrr]^{c}\ar[r]_{l}\ar@/_{1pc}/[rrd]_{a} & Y\ar[rr]_{\textnormal{id}}\ar[rd]_{h} & \dtwocell{d}{\eta}\dtwocell[0.55]{dll}{\alpha}\dtwocell[0.55]{drr}{\beta} & Y\ar[r]_{r} & Z\\
\; & \; & Y'\ar[ur]_{k}\ar@/_{1pc}/[urr]_{b} & \; & \;
}
\]
and note the induced $\alpha$ and $\beta$ are invertible, as $c\Rightarrow\left(l;h\right);\left(k;r\right)$
is generic by Lemma \ref{compgen}.

$\left(\Leftarrow\right)\colon$ Suppose now we are given a unit $\eta$
whiskered by $l$ and $r$ giving a diagram as below
\[
\xymatrix@=1.5em{ &  & \dtwocell{d}{\delta}\\
X\ar[r]_{l}\ar@/^{2pc}/[rrrr]^{c} & Y\ar[rr]_{\textnormal{id}}\ar[rd]_{h} & \dtwocell{d}{\eta} & Y\ar[r]_{r} & Z\\
\; & \; & Y'\ar[ur]_{k} & \; & \;
}
\]
We factor the above through this initial generic, yielding the same
diagram. We then note that $\left(l;r\right)\Rightarrow\left(l;h\right);\left(k;r\right)$
is generic by Lemma \ref{compgen}.
\end{proof}
We can now apply the above lemma to give a simple description of our
indexing sets $\mathfrak{M}_{c}^{X,Y',Z}$ of generic 2-cells.
\begin{lem}
\label{genericsindexh} Assume $\mathscr{C}$ is a generic bicategory
satisfying axioms \ref{initialgeneric} and \ref{respinit}. Then
for any 1-cell $c\colon X\to Z$ (with a chosen initial generic $l;r$
through an object $Y$) and object $Y'$, the set of equivalence classes
of generic 2-cells out of $c$ and through $Y'$, denoted $\mathfrak{M}_{c}^{X,Y',Z}$,
is the set of equivalence classes of left adjoints $h\colon Y\to Y'$.
\end{lem}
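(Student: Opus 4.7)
The plan is to construct an explicit bijection $\Phi$ between the two sets as follows. Starting from the chosen initial generic $\delta\colon c\Rightarrow l;r$ through $Y$, given any generic 2-cell $\gamma\colon c\Rightarrow a;b$ through $Y'$, apply Axiom \ref{initialgeneric} to factor $\gamma$ as the pasting of $\delta$ with a (representative) generic $\eta\colon 1_Y\Rightarrow h;k$ and comparison 2-cells $\alpha,\beta$. By Lemma \ref{genericsareunits}, $\eta$ is the unit of an adjunction $h\dashv k$ with $h\colon Y\to Y'$ a left adjoint, so I set $\Phi([\gamma]):=[h]$. Well-definedness on equivalence classes follows from the uniqueness clause of Axiom \ref{initialgeneric}, which forces any two such factorizations to give isomorphic $h$'s; since Corollary \ref{1caty} identifies equivalence classes of left adjoints with isomorphism classes, this gives $[h]=[h']$.

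For surjectivity, given any left adjoint $h\colon Y\to Y'$ with a chosen right adjoint $k$, Lemma \ref{unitimpliesgen} tells us the unit $\eta\colon 1_Y\Rightarrow h;k$ is a generic 2-cell out of the identity. Pasting $\delta$ with the whiskering of $\eta$ by $l$ on the left and $r$ on the right produces a 2-cell $c\Rightarrow (l;h);(k;r)$ which is generic by Lemma \ref{compgen}, and which evidently maps to $[h]$ under $\Phi$.

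For injectivity, suppose $\Phi([\gamma])=\Phi([\gamma'])$, so the left adjoints $h,h'$ extracted from the respective factorizations through $\delta$ satisfy $h\cong h'$. Via the mates correspondence this isomorphism induces an isomorphism $k\cong k'$ between the chosen right adjoints, which by Proposition \ref{adjcellsunique} fits coherently with the representative units $\eta,\eta'$. Whiskering by $l$ and $r$ then yields isomorphisms $l;h\cong l;h'$ and $k;r\cong k';r$ which, when combined with the isomorphisms between $\alpha,\alpha'$ and $\beta,\beta'$ (again unique by Proposition \ref{adjcellsunique} since both sides are 2-cells with a left or right adjoint as one leg), witness that $\gamma$ and $\gamma'$ lie in the same connected component, i.e.\ $[\gamma]=[\gamma']$.

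The main obstacle is the injectivity step: one must patiently verify that the isomorphism $h\cong h'$ really propagates to a coherent isomorphism between the full pastings $\gamma$ and $\gamma'$, involving also the comparisons $\alpha,\alpha',\beta,\beta'$. This ultimately reduces to Proposition \ref{adjcellsunique} (uniqueness of 2-cells between adjoints) together with the fact that both pastings $c\Rightarrow(l;h);(k;r)$ and $c\Rightarrow(l;h');(k';r)$ are generic by Lemma \ref{compgen}, so any 2-cell between them in the relevant category of elements is forced by initiality.
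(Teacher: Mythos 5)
Your proof is correct and is essentially the paper's own argument: both directions proceed by extracting $h$ from a factorization through the chosen initial generic (Axiom \ref{initialgeneric} with Lemma \ref{genericsareunits}), recovering a generic from a left adjoint by whiskering its unit (Lemmas \ref{unitimpliesgen} and \ref{compgen}), and propagating an isomorphism $h\cong h'$ back to a coherent isomorphism of generic 2-cells via its mate $k'\cong k$ --- the paper's explicit composite $k'\Rightarrow k;h;k'\Rightarrow k;h';k'\Rightarrow k$ is precisely this mate, and the unit-coherence you assert does hold, though strictly it needs one extra line beyond Proposition \ref{adjcellsunique}: by the mates correspondence 2-cells $1\Rightarrow h';k'$ biject with 2-cells $h'\Rightarrow h'$, of which there is exactly one, so the pasted 2-cell must equal $\eta'$. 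The only misstep is your parenthetical claiming the comparisons $\alpha,\alpha',\beta,\beta'$ are unique by Proposition \ref{adjcellsunique} --- the targets $a,b$ are arbitrary 1-cells, not adjoints, so that proposition does not apply --- but this is harmless since what you actually need is their invertibility, which your closing paragraph correctly derives (as the paper does in Lemma \ref{genericsarewhiskers}) from genericity of the whiskered pastings via Lemma \ref{compgen} together with initiality.
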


\begin{proof}
Suppose we are given a 1-cell $c\colon X\to Z$ with a chosen initial
generic $l;r$ through an object $Y$. We regard $l$ and $r$ as
being fixed.

We see that from any generic 2-cell out of $c$, we recover by Lemma
\ref{genericsarewhiskers}, a 1-cell $h\colon Y\to Y'$, a right adjoint
$k\colon Y'\to Y$ and a unit $\eta\colon1\Rightarrow kh$. Conversely,
given such a triple $\left(h,k,\eta\right)$ we recover a generic
2-cell out of $c$ by whiskering.

Let us suppose we have two isomorphic generic 2-cells out of $c$,
then one is isomorphic to a whiskering of data $\left(h,k,\eta\right)$
and the other a whiskering of $\left(h',k',\eta'\right)$, and thus
we have isomorphisms $\alpha$ and $\beta$ yielding an equality as
below
\[
\xymatrix@=1.5em{X\ar[r]^{l}\ar@/_{1pc}/[rrd]_{l;h'} & Y\ar[rr]^{1}\ar[rd]_{h} & \dtwocell[0.45]{d}{\eta}\dtwocell[0.55]{dll}{\alpha}\dtwocell[0.55]{drr}{\beta} & Y\ar[r]^{r} & Z & \ar@{}[rd]|-{=} &  & X\ar[r]^{l} & Y\ar[rr]^{1}\ar[rd]_{h'} & \dtwocell[0.45]{d}{\eta'} & Y\ar[r]^{r} & Z\\
\; & \; & Y'\ar[ur]_{k}\ar@/_{1pc}/[urr]_{k';r} & \; & \; &  & \; & \; & \; & Y'\ar[ur]_{k'} & \; & \;
}
\]
By uniqueness, this induces comparison isomorphisms $h\cong h'$ and
$k\cong k'$ compatible with $\eta$. Indeed, denoting the induced
isomorphism $h\Rightarrow h'$ by $\theta$ (and forgetting the induced
$k\Rightarrow k'$) we may take $\xi\colon k'\Rightarrow k$ to be
the composite
\[
\xymatrix{k'\myar{\eta k'}{r} & khk'\myar{k\theta k'}{r} & kh'k'\myar{k\epsilon'}{r} & k}
\]
and one can verify that $\eta'$ pasted with $\theta^{-1}$ and $\xi$
is $\eta$, so that from an isomorphism $h\Rightarrow h'$ we recover
an isomorphism of generic 2-cells $\left(h,k,\eta\right)\cong\left(h',k',\eta'\right)$.
It follows the assignment sending a generic 2-cell to its representative
triple then to its left adjoint component
\[
\textnormal{generic 2-cell}\mapsto\left(h,k,\eta\right)\mapsto h
\]
reflects and preserves isomorphisms. Thus we are to identify two generic
2-cells precisely when the left adjoint components are isomorphic
(that is equal in their equivalence class).
\end{proof}
\begin{rem}
Note that a representative generic corresponding to a $h\colon Y\to Y'$
has the form $c\Rightarrow\left(l;h\right);\left(k;r\right)$, so
that the left and right projections are $l;h$ and $k;r$ respectively.
This then defines the projections from $\mathfrak{M}_{c}^{X,Y',Z}$
to $\mathscr{C}_{X,Y'}$ and $\mathscr{C}_{Y',Z}$ .
\end{rem}

\subsection{The main theorem}

We now have all of the necessary ingredients to prove our characterization
of spans. We will first show that for a generic $\mathscr{C}$ satisfying
axioms \ref{initialgeneric} and \ref{respinit} the hom-categories
must be equivalent to hom-categories of spans, and then proceed by
using the generic bicategory structure (and our indexing of the generics)
to deduce that composition must be given by pullback.
\begin{thm}
The following are equivalent for any given bicategory $\mathscr{C}$:
\begin{enumerate}
\item $\mathscr{C}$ is equivalent to a bicategory $\cat{Span}\left(\mathcal{E}\right)$
for a category $\mathcal{E}$ with pullbacks;
\item $\mathscr{C}$ is generic and satisfies axioms \ref{initialgeneric}
and \ref{respinit}.
\end{enumerate}
\end{thm}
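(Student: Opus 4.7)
The direction (1) $\Rightarrow$ (2) has essentially been verified in the introduction (both axioms were established for $\cat{Span}(\mathcal{E})$ there, and the coproduct-of-representables identification making $\cat{Span}(\mathcal{E})$ generic was displayed). For the converse (2) $\Rightarrow$ (1), the plan is to reconstruct a category $\mathcal{E}$ with pullbacks out of $\mathscr{C}$ and exhibit a biequivalence $F\colon \cat{Span}(\mathcal{E}) \to \mathscr{C}$.

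We take $\mathcal{E}$ to be the locally discrete 1-category of left adjoints in $\mathscr{C}$ furnished by Corollary \ref{1caty}. The main technical step is to show that $\mathcal{E}$ has pullbacks. Given a cospan of left adjoints $a\colon A \to Y$ and $b\colon B \to Y$, the composite $a; b^*$ in $\mathscr{C}$ admits, by Axiom \ref{initialgeneric}, an initial generic factorization $a; b^* \cong l; r$ through some object $P$, and by Corollary \ref{partsareadjoints} we may write $l = \pi_A^*$ and $r = \pi_B$ for left adjoints $\pi_A\colon P \to A$ and $\pi_B\colon P \to B$. We plan to show $(P, \pi_A, \pi_B)$ is the pullback of $a$ and $b$ in $\mathcal{E}$. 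Commutativity $a\pi_A = b\pi_B$ in $\mathcal{E}$ is obtained by taking the mate of $\pi_A^*; \pi_B \cong a; b^*$ through the adjunctions $\pi_A \dashv \pi_A^*$ and $b \dashv b^*$, producing an iso between left adjoints that is canonical by Proposition \ref{adjcellsunique} and Lemma \ref{adjointcellsinvert}. For the universal property, given a competing cone $(q_1, q_2)$ with $aq_1 = bq_2$, we take the mate of $q_1; a \cong q_2; b$ (using $q_1 \dashv q_1^*$ and $b \dashv b^*$) to obtain a 2-cell $q_1^*; q_2 \Rightarrow a; b^* \cong \pi_A^*; \pi_B$. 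By Lemma \ref{compinit} the composite $q_1^*; q_2$ is itself an initial generic through $Q$, so Axiom \ref{initialgeneric} factors this 2-cell through a generic out of the identity indexed, by Lemma \ref{genericsindexh}, by a left adjoint $h\colon Q \to P$; taking mates of the two induced comparisons yields the equalities $\pi_A h = q_1$ and $\pi_B h = q_2$ in $\mathcal{E}$, and uniqueness of $h$ follows from the uniqueness clause of Axiom \ref{initialgeneric} together with Lemma \ref{genericsindexh}.

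With $\mathcal{E}$ now a category with pullbacks, we define $F\colon \cat{Span}(\mathcal{E}) \to \mathscr{C}$ to be the identity on objects, to send a span $(s, t)$ with vertex $T$ to the composite $s^*; t$, and to send a morphism of spans $h\colon T \to T'$ (so $s = s'h$ and $t = t'h$) to the 2-cell $s^*; t \cong s'^*; h^*; h; t' \Rightarrow s'^*; t'$ obtained by rewriting $s^*$ via adjointness and collapsing $h^*; h$ with its counit. Essential surjectivity on 1-cells is immediate from Axiom \ref{initialgeneric} and Corollary \ref{partsareadjoints}, since every 1-cell of $\mathscr{C}$ is isomorphic to some $s^*; t$. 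Local full faithfulness combines Axiom \ref{initialgeneric} with Lemma \ref{genericsindexh}: any 2-cell $s^*; t \Rightarrow s'^*; t'$ factors uniquely through the initial generic $s^*; t$ via a generic out of the identity indexed by a left adjoint $h\colon T \to T'$, and the induced comparisons $s^*; h \Rightarrow s'^*$ and $h^*; t \Rightarrow t'$ correspond by mates to isomorphisms $h; s' \cong s$ and $h; t' \cong t$ between left adjoints (invertible by Lemma \ref{adjointcellsinvert}), exhibiting $h$ as a morphism of spans. Preservation of composition up to coherent iso is then built into the construction of pullbacks: the isomorphism $t; s'^* \cong \pi^*; \pi'$ that compares $F(s,t); F(s',t')$ with $F$ applied to the span composite is, by definition, the initial generic factorization defining the pullback of $t$ and $s'$ in $\mathcal{E}$.

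The principal obstacle is the pullback step: one must select the mates so that the mediating 1-cell is a left adjoint in the correct direction ($Q \to P$ rather than $P \to Q$), and then carefully extract uniqueness of this mediating map from the uniqueness clause of Axiom \ref{initialgeneric} together with the indexing of Lemma \ref{genericsindexh}. Once this is in place, the local full faithfulness of $F$ uses the same mates-trick symmetrically, and the compatibility with composition is tautological from the construction of $\mathcal{E}$.
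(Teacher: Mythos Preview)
Your argument is sound and reaches the same conclusion, but the organization is genuinely different from the paper's. The paper never verifies in advance that $\mathcal{E}$ has pullbacks. Instead it works in the opposite direction: it builds local equivalences $\mathscr{C}_{X,Z}\to\cat{Span}(\mathcal{E})_{X,Z}$ (sending $c$ with chosen initial generic $l;r$ to the span $(l_*,r)$), transports the bicategory structure of $\mathscr{C}$ across these equivalences via doctrinal adjunction for icons, and only then reads off from the defining coproduct-of-representables formula together with Lemma~\ref{genericsindexh} that the transported composition on $\cat{Span}(\mathcal{E})$ satisfies the universal property of pullback. So pullbacks in $\mathcal{E}$ emerge as a \emph{consequence} of the generic structure rather than being constructed first.

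Your route---take the initial generic of $a;b^*$ as the pullback of $a,b$, then assemble an explicit pseudofunctor $F\colon\cat{Span}(\mathcal{E})\to\mathscr{C}$ via $(s,t)\mapsto s^*;t$---is more direct and yields a concrete pseudofunctor rather than an abstract transport of structure. The cost is that you must do several mates computations by hand (commutativity of the pullback square, the uniqueness of the mediating $h$, and the pseudofunctor coherence for $F$), whereas the paper's doctrinal-adjunction step absorbs these. In particular, the point you flag as the ``principal obstacle'' (uniqueness of the mediating $h$) really does require checking that any competing $h'$ with $\pi_A h'=q_1$ and $\pi_B h'=q_2$ produces, via mates, a factorization of the \emph{same} $2$-cell $\gamma\colon q_1^*;q_2\Rightarrow \pi_A^*;\pi_B$; this is routine but not literally immediate from Axiom~\ref{initialgeneric} and Lemma~\ref{genericsindexh} alone. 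With that verified, both arguments are complete, and each illuminates a different aspect: yours makes the pullback explicit, the paper's makes clear that the composition law is forced by the generic structure.
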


\begin{proof}
The implication $\left(1\right)\Rightarrow\left(2\right)$ was shown
in the introduction, so it remains to check $\left(2\right)\Rightarrow\left(1\right)$.
Let $X$ and $Z$ be given objects. We take $\mathcal{E}$ to be the
1-category of representative left adjoints given by Corollary \ref{1caty},
and define the functor $\mathscr{C}_{X,Z}\to\cat{Span}\left(\mathcal{E}\right)_{X,Z}$
by the assignment sending a 1-cell $c$ with chosen initial generic
$c\Rightarrow l;r$ to the span
\[
\xymatrix@R=1em{ & Y\ar[rd]^{r}\ar[ld]_{l_{*}}\\
X &  & Z
}
\]
where $l_{*}$ is a representative left adjoint of $l$. To assign
a 2-cell $\alpha\colon c\Rightarrow c'$ to a morphism of spans, we
first factor $\alpha$ through the initial generic $c\Rightarrow l;r$
giving a diagram
\begin{equation}
\xymatrix{X\ar[r]^{l}\ar@{=}[d]\dtwocell{rd}{\theta} & Y\ar[r]^{1}\ar[d]_{h}\dtwocell{rd}{\eta} & Y\ar[r]^{r}\dtwocell{rd}{\phi} & Z\ar@{=}[d]\\
X\ar[r]_{l'} & Y'\ar[r]_{1} & Y'\ar[r]_{r'}\ar[u]_{k} & Z
}
\label{3pasting}
\end{equation}
We then note that we have (from pasting $\theta$ with $\eta$, and
$\eta$ with $\phi$ respectively) 2-cells $l\Rightarrow l';k$ and
$r\Rightarrow h;r'$. As morphisms of left adjoints correspond to
morphisms of right adjoints in the opposite direction by the mates
correspondence, we also have 2-cells $h;l'_{*}\Rightarrow l_{*}$
and $r'^{*};k\Rightarrow r^{*}$ respectively. In particular, as the
2-cells of left adjoints $r\Rightarrow h;r'$ and $h;l'_{*}\Rightarrow l_{*}$
collapse to identities in $\mathcal{E}$, we have a commuting diagram
\begin{equation}
\xymatrix@R=1em{ & Y\ar[rd]^{r}\ar[ld]_{l_{*}}\ar[dd]|-{h}\\
X &  & Z.\\
 & Y'\ar[ru]_{r'}\ar[lu]^{l'_{*}}
}
\label{morspans}
\end{equation}

Moreover, this assignment is fully faithful since given a morphism
of spans as in \eqref{morspans}, we may recover a diagram \eqref{3pasting}
by taking $\varphi$ as the mate of $r\cong h;r'$ under the adjunction
$h\dashv k$, and $\theta$ as the mate of $h;l'_{*}\cong l_{*}$
under the adjunctions $l_{*}\dashv l$ and $l'_{*}\dashv l'$. It
is clear this application of the mates correspondence defines a bijection.

To see that this assignment is essentially surjective, suppose we
are given a span
\[
\xymatrix@R=1em{ & T\ar[rd]^{t}\ar[ld]_{s}\\
X &  & Z
}
\]
and note that we have the initial generic $\left(s^{*};t\right)\Rightarrow s^{*};t$
by Lemma \ref{compinit}. By universality of the chosen initial generic
$l;r$ we have an induced comparison into $s^{*};t$ as on the top
half below
\begin{equation}
\xymatrix{X\ar[r]^{l}\ar@{=}[d]\dtwocell{rd}{\theta_{1}} & Y\ar[r]^{1}\ar[d]^{h_{1}}\dtwocell{rd}{\eta_{1}} & Y\ar[r]^{r}\dtwocell{rd}{\phi_{1}} & Z\ar@{=}[d]\\
X\ar[r]_{s^{*}}\dtwocell{rd}{\theta_{2}}\ar@{=}[d] & T\ar[r]_{1}\dtwocell{rd}{\eta_{2}}\ar[d]^{h_{2}} & T\ar[r]_{t}\ar[u]_{k_{1}}\dtwocell{rd}{\phi_{2}} & Z\ar@{=}[d]\\
X\ar[r]_{l} & Y\ar[r]_{1} & Y\ar[r]_{r}\ar[u]_{k_{2}} & Z
}
\label{6paste}
\end{equation}
and conversely by universality of $s^{*};t$ we have an induced comparison
into $l;r$ as on the bottom half below. By uniqueness (and the fact
$1_{Y}\Rightarrow1_{Y};1_{Y}$ is a generic 2-cell) it follows that
$h_{1};h_{2}\cong1_{Y}$ in $\mathscr{C}$, meaning that $h_{1};h_{2}=1_{Y}$
in $\mathcal{E}$. Clearly, we also have the dual so that $h_{2};h_{1}=1_{T}$,
and thus applying our functor to the top half of \eqref{6paste} yields
an isomorphism of spans $\left(l,r\right)\cong\left(s,t\right)$,
proving essential surjectivity.

We now note that the bicategory structure on $\mathscr{C}$ transports
through the equivalences $\mathscr{C}_{X,Z}\to\cat{Span}\left(\mathcal{E}\right)_{X,Z}$
by Doctrinal adjunction \cite{doctrinal} (viewing bicategories as
the objects of the 2-category of bicategories, pseudofunctors, and
icons \cite{icons}), and thus the family of hom-categories $\cat{Span}\left(\mathcal{E}\right)_{X,Y}$
admits the structure of a bicategory with composition given by
\[
\xymatrix@R=0.5em{\cat{Span}\left(\mathcal{E}\right)_{X,Y}\times\cat{Span}\left(\mathcal{E}\right)_{Y,Z}\ar[r] & \mathscr{C}_{X,Y}\times\mathscr{C}_{Y,Z}\myar{\circ}{r} & \mathscr{C}_{X,Z}\ar[r] & \cat{Span}\left(\mathcal{E}\right)_{X,Z}\\
\left(a,b\right),\left(c,d\right)\ar@{|->}[r] & \left(a^{*};b\right),\left(c^{*};d\right)\ar@{|->}[r] & l^{*};r\ar@{|->}[r] & \left(l,r\right)
}
\]
where $l^{*};r$ is a chosen initial generic for $\left(a^{*};b\right);\left(c^{*};d\right)$,
and $l$ is a representative for the left adjoints to $l^{*}$. We
know composition in $\mathscr{C}$ satisfies, for any initial generic
$s^{*};t\colon X\to T\to Z$ and initial generics $a^{*};b\colon X\to Y$
and $c^{*};d\colon Y\to Z$, the natural bijection
\[
\mathscr{C}_{X,Z}\left[\left(s^{*};t\right),\left(a^{*};b\right);\left(c^{*};d\right)\right]\cong\sum_{m\in\mathfrak{M}_{c}^{X,Y,Z}}\mathscr{C}_{X,Y}\left(l_{m},\left(a^{*};b\right)\right)\times\mathscr{C}_{X,Y}\left(r_{m},\left(c^{*};d\right)\right)
\]
so that by Lemma \ref{genericsindexh}
\[
\mathscr{C}_{X,Z}\left[\left(s^{*};t\right),\left(l^{*};r\right)\right]\cong\sum_{h\colon T\to Y}\mathscr{C}_{X,Y}\left(\left(s^{*};h\right),\left(a^{*};b\right)\right)\times\mathscr{C}_{X,Y}\left(\left(h^{*};t\right),\left(c^{*};d\right)\right)
\]
which says for all $s$ and $t$ in $\mathcal{E}$, giving a morphism
of spans (that is a morphism into the vertex of $\left(l,r\right)$
as on the left below) is the same as giving a cone
\[
\xymatrix{ & T\ar[rd]^{t}\ar[ld]_{s}\ar@{..>}[dd] &  &  &  &  & T\ar@{..>}[dd]|-{h}\ar@{..>}[dr]|-{y}\ar@{..>}[dl]|-{x}\ar@/^{2pc}/[rrdd]^{t}\ar@/_{2pc}/[lldd]_{s}\\
X &  & Z &  &  & P\ar[rd]^{b}\ar[ld]_{a} &  & Q\ar[rd]^{d}\ar[ld]_{c}\\
 & M\ar[ru]_{r}\ar[lu]^{l} &  &  & X &  & Y &  & Z
}
\]
as on the right above, thus showing that composition is by pullback
(after taking $\left(s,t\right)$ to be $\left(l,r\right)$ in order
to recover the limiting cone).
\end{proof}
\bibliographystyle{siam}
\bibliography{referencesspansgeneric}

\end{document}